\numberwithin{equation}{section}
\numberwithin{figure}{section}
\DeclareMathOperator*{\supp}{supp}
\newcommand{\abs}[1]{\left\vert#1\right\vert}
\newcommand{\Vrt}[1]{\left\Vert #1 \right\Vert}
\newcommand{\cl}[1]{\textup{cl}({#1})}
\newcommand{\COMMENT}[1]{}
\newcommand*{\quark}{\setbox0\hbox{$x$}\hbox to\wd0{\hss$\cdot$\hss}}
\newcommand*{\rd}{\mathrm{d}}
\newcommand*{\defeq}{\coloneqq}
\newcommand{\todo}[1]{\bgroup\color{red}[#1]\egroup}
\newcommand{\allok}{\bgroup\color{purple}\ensuremath{[\checkmark]}\egroup}
\newcommand{\hcl}[1]{\bgroup\color{green!50!black}[#1]\egroup}
\newcommand{\tjs}[1]{\bgroup\color{blue}[#1]\egroup}
\theoremstyle{plain}
\newtheorem{theorem}{Theorem}[section]
\newtheorem{lemma}[theorem]{Lemma}
\newtheorem{proposition}[theorem]{Proposition}
\newtheorem{corollary}[theorem]{Corollary}
\newtheorem{assumption}[theorem]{Assumption}
\theoremstyle{definition}
\newtheorem{definition}[theorem]{Definition}
\newtheorem{example}[theorem]{Example}
\newtheorem{remark}[theorem]{Remark}
\begin{document}

\title[Strong and weak modes of measures on vector spaces]{Equivalence of weak and strong modes \\of measures on topological vector spaces}

\begin{abstract}
	A strong mode of a probability measure on a normed space $X$ can be defined as a point $u$ such that the mass of the ball centred at $u$ uniformly dominates the mass of all other balls in the small-radius limit.
	Helin and Burger weakened this definition by considering only pairwise comparisons with balls whose centres differ by vectors in a dense, proper linear subspace $E$ of $X$, and posed the question of when these two types of modes coincide.
	We show that, in a more general setting of metrisable vector spaces equipped with measures that are finite on bounded sets, the density of $E$ and a uniformity condition suffice for the equivalence of these two types of modes.
	We accomplish this by introducing a new, intermediate type of mode.
	We also show that these modes can be inequivalent if the uniformity condition fails.
	Our results shed light on the relationships between among various notions of maximum a posteriori estimator in non-parametric Bayesian inference.
\end{abstract}

\author{Han Cheng Lie}
%    Address of record for the research reported here
\address{Han Cheng Lie\\
	Institut f\"{u}r Mathematik\\
	Freie Universit\"{a}t Berlin\\
	14195 Berlin\\
	Germany}
%    Current address
\curraddr{Zuse Institute Berlin\\
	Takustr.\ 7\\
	14195 Berlin\\
	Germany}
\email{hlie@math.fu-berlin.de}

%    Information for second author
\author{T.~J.~Sullivan}
%    Address of record for the research reported here
\address{T.~J.~Sullivan\\
	Freie Universit{\"a}t Berlin and Zuse Institute Berlin\\
	Takustr.\ 7\\
	14195 Berlin\\
	Germany}
%    Current address
%\curraddr{Free University of Berlin and Zuse Institute Berlin\\
%	Takustr.\ 7\\
%	14195 Berlin\\
%	Germany}
\email{sullivan@zib.de}

\date{\today}

\subjclass[2010]{
	28C20 %% MEASURE AND INTEGRATION > Set functions and measures on spaces with additional structure > Set functions and measures and integrals in infinite-dimensional spaces (Wiener measure, Gaussian measure, etc.)
	(62G35 %% STATISTICS > Nonparametric inference > Robustness
	49Q20 %% CALCULUS OF VARIATIONS AND OPTIMAL CONTROL; OPTIMIZATION > Manifolds > Variational problems in a geometric measure-theoretic setting
	49N45) %% CALCULUS OF VARIATIONS AND OPTIMAL CONTROL; OPTIMIZATION > Miscellaneous topics > Inverse problems
}

\keywords{mode, small ball probabilities, topological vector space, maximum a posteriori estimation, minimum action principle}

\maketitle

\section{Introduction}
\label{sec:introduction}

A probability measure $\mu$ on a topological vector space $X$ can be described using summary statistics such as means and covariances or, as in this article, \emph{modes}, meaning points of maximum $\mu$-mass in an appropriate sense.
There are multiple ways of defining such modes, particularly for infinite-dimensional vector spaces; the \emph{strong mode} of Dashti et al.~\cite{dashti_law_stuart_voss_map_estimators_2013_published} and the \emph{weak mode} of Helin and Burger~\cite{helin_burger_map_estimates_2015_published} describe the notion of a point of maximum probability in a separable Banach space by examining the probabilities of norm balls in the small-radius limit.

In the setting of Bayesian inverse problems, the modes of the posterior are known as \emph{maximum a posteriori estimators} or \emph{MAP points}).
MAP points play an important role because of their interpretation as being the most likely solutions of the inverse problem given the observed data \cite{stuart_inverse_problems}.
A far-from-exhaustive list of studies that have considered MAP points is \cite{bui-thanh-nguyen2016,yao_hu_li_2016,dunlop_stuart_2016,alexanderian_petra_stadler_ghattas,hyvonen_leinonen,burger_lucka,calatroni_delosreyes_schoenlieb,pereyra,heas_lavancier_kadri_harouna,harhanen_hyvonen_majander_staboulis,liu_zheng,calvetti_hakula_pursiainen_somersalo,hamalainen_kallonen_kolehmainen_lassas_niinimaki_siltanen,vaksman_zibulevsky_elad,krol_li_shen_xu,helin_lassas}.
Modes play an important role in other areas of applied mathematics as well.
For example, in mathematical models of chemical reactions, rare events of diffusion processes on energy landscapes play an important role;
the rare events of interest are typically transitions from one energy well or metastable state to another.
The qualitative description of such rare events requires studying the paths that a diffusion process is most likely to take.
Studying the modes of the law of the diffusion process on the associated path space is often done with the help of a least-action principle formulated by an Onsager--Machlup functional, and an application of Freidlin--Wentzell theory or large deviations theory \cite{freidlinwentzell, dembozeitouni, e_ren_vandeneijnden,lu_stuart_weber}.

Onsager--Machlup functionals --- and, more generally, variational approaches --- play an important role in Bayesian inverse problems.
When $\mu$ is a Bayesian re-weighting of a Gaussian prior, the strong mode of Dashti et al.\ (see \cite[Definition~3.1]{dashti_law_stuart_voss_map_estimators_2013_published} or our Definition~\ref{def:mode}) is the minimiser of an appropriate Onsager--Machlup functional \cite[Theorem~3.5]{dashti_law_stuart_voss_map_estimators_2013_published}.
This functional can be formally regarded as a misfit regularised by the negative logarithm of the prior density.
Under the assumptions considered in \cite{helin_burger_map_estimates_2015_published}, the weak mode of Helin and Burger (see \cite[Definition~4]{helin_burger_map_estimates_2015_published} or our Definition~\ref{def:Eweak_mode}) corresponds to the points where certain logarithmic derivatives of $\mu$ vanish; see \cite[Theorems~2 and 3]{helin_burger_map_estimates_2015_published}.
Recently, Agapiou et al.\ \cite{agapiou_et_al_sparsity} resolved one of the questions raised by Helin and Burger, by extending the weak mode formalism and variational approach to analyse $L^1$-priors on Besov spaces; these priors are perceived as having sparsity-promoting properties advantageous to inverse problems and imaging.

Helin and Burger \cite{helin_burger_map_estimates_2015_published} asked under what circumstances a weak mode is also strong mode.
While it followed immediately from the definitions that a strong mode is a weak mode, the converse was far from clear.
Their motivation for the question would be to determine whether the weak mode was a genuinely different type of mode.
Another motivation for demonstrating the equivalence of strong and weak modes is that, if it were known that every weak mode is a strong mode, then one could apply the variational characterisation of weak modes in terms of logarithmic derivatives in order to obtain the strong modes of Dashti et al.
In particular, one could enlarge the available set of tools for identifying strong modes --- and thus MAP estimators for Bayesian inverse problems --- by drawing upon the powerful theory of differentiable measures \cite{bogachev_dmmc}.

In this work, we formulate the question of when a weak mode is a strong mode in a more general context than that considered in \cite{helin_burger_map_estimates_2015_published}.
We present two conditions that jointly suffice for the equivalence of strong and weak modes:
a uniformity condition \eqref{eq:uniformity_condition}, and the topological density of the set $E$ with which one defines the weak mode.
We introduce an intermediate type of mode and reduce the original question to the task of identifying sufficient conditions for two simpler equivalence statements, presented as Theorem~\ref{thm:equivalence_Estrong_strong} and Theorem~\ref{thm:equivalence_Eweak_Estrong} below.
We state the necessary definitions and results in Section~\ref{sec:main_definitions_results}, and prove Theorems~\ref{thm:equivalence_Estrong_strong} and \ref{thm:equivalence_Eweak_Estrong} in Section~\ref{sec:E_strong_equiv_strong} and Section~\ref{sec:E_weak_equiv_E_strong} respectively.
We also illustrate the importance of the uniformity condition \eqref{eq:uniformity_condition} by giving in Example~\ref{example:weak_does_not_imply_weak_uniform} an example of a measure that does not satisfy the uniformity condition and has weak modes but no strong modes.

The significance of Theorems~\ref{thm:equivalence_Estrong_strong} and \ref{thm:equivalence_Eweak_Estrong} for inverse problems is that, at the level of generality in which Helin and Burger formulated their question, weak modes and strong modes are not equivalent.
Some of the assumptions therein, e.g.\ the existence of representatives of Radon--Nikodym derivatives that are continuous on $X$, are neither sufficient nor necessary.
In addition, our main results show that one can consider weak modes for a much larger class of measures, since we make no assumptions about differentiability of $\mu$.
When one wishes to compute modes, then such assumptions may be useful for the variational characterisation of weak modes as zeros of logarithmic derivatives, but they are not relevant for the purpose of determining the equivalence of strong and weak modes.

In the context of inverse problems, our results provide theoretical support for certain choices of negative log-likelihood or potential functions, in which the Onsager--Machlup functional assumes the value $+\infty$ outside a topologically dense subset of $X$ \cite[Equation (2.1)]{dashti_law_stuart_voss_map_estimators_2013_published}.
Our results apply not only to normed spaces, but also to metric spaces such as $F$-spaces, so that one may apply the MAP approach in function spaces that have desirable sparsity properties, e.g.\ $L^{p}$ for $0 < p < 1$.
We believe that the study of MAP points on more general spaces than normed vector spaces will prove to be relevant in future research on Bayesian inverse problems, and that our results provide a useful first step in this direction.

In Section~\ref{sec:discussion}, we consider some extensions of our results.
We show that modes can be defined for complex vector spaces in Section~\ref{ssec:arbitrary_nonzero_scaling_factors}, and demonstrate that similar equivalence statements hold for the \textit{local} modes defined by Agapiou et al.\ \cite{agapiou_et_al_sparsity}.
In Section~\ref{ssec:examples_concerning_existence_K-dependence_modes}, we show that a probability measure on a topological vector space need not have a mode (Example~\ref{eg:no_mode}), and we show that the set of modes can depend on the choice of the set $K$ (Example~\ref{eg:mode_depends_on_K}).
The results of the latter section indicate that the study of modes contains subtle surprises.

\section{Main results}
\label{sec:main_definitions_results}

In this paper, $\mathbb{N} \defeq \{ 1, 2, \dots, \}$, $\mathbb{N}_{0} \defeq \mathbb{N} \cup \{ 0 \}$, and $\mathbb{K} \defeq \mathbb{R}$ or $\mathbb{C}$.
We shall work in the following setting:

\begin{assumption}
	\label{asmp:context}
	$X$ is a first countable, Hausdorff topological vector space over $\mathbb{K}$; $\mu$ is a non-zero measure on the Borel $\sigma$-algebra of $X$ that is finite on bounded sets; and $K$ is a bounded, open neighbourhood of the origin.
\end{assumption}

In \cite{dashti_law_stuart_voss_map_estimators_2013_published,helin_burger_map_estimates_2015_published}, $X$ is a separable Banach space, $K$ is its unit norm ball, and $\mu$ is a Borel probability measure with topological support $\supp(\mu) = X$.
Here, we do not assume that $X$ is complete or separable, that $\supp(\mu)=X$, or that $\mu$ is finite on $X$.
The existence of a set $K$ satisfying Assumption~\ref{asmp:context} implies that $X$ is metrisable \cite[Theorem~6.2.1]{narici_beckenstein}.

For any $\lambda \in \mathbb{K}$, $x \in X$, and $E\subset X$, $x + \lambda E \defeq \{x + \lambda y \mid y \in E\}$.
We denote the collection of open neighbourhoods of $x \in X$ by $\mathcal{N}(x)$.
For any $U\in\mathcal{N}(0)$, let $U(x,\lambda)\defeq x+\lambda U$.
For $K$ as in Assumption~\ref{asmp:context}, define the evaluation map $f \colon X\times\mathbb{R}_{>0}\to\mathbb{R}_{\geq 0}$ by
\begin{equation}
	\label{eq:evaluation_map_f}
	f(x, r)\defeq \mu(K(x, r)) \geq 0.
\end{equation}
In Section~\ref{ssec:arbitrary_nonzero_scaling_factors}, we consider the more general case of nonzero (but not necessarily strictly positive) radius parameter $r$. This is important in situations when a problem is formulated over a complex topological vector space $X$: in such cases, admitting only real, strictly positive values of the radius parameter $r$ would limit the ensuing analysis to the real restriction of $X$.
Assumption~\ref{asmp:context} implies that $f(x, r)$ is finite for all $(x, r) \in X \times \mathbb{R}_{>0}$.
Below, we reformulate \cite[Definition 3.1]{dashti_law_stuart_voss_map_estimators_2013_published} of a MAP estimator for $\mu$.

\begin{definition}\label{def:mode}
	A \emph{strong mode} (or simply \emph{mode}) of $\mu$ is any $u \in X$ satisfying
	\begin{equation}
		\label{eq:mode}
		\lim_{r \downarrow 0} \frac{\sup_{z \in X} f(z, r)}{f(u, r)}= 1.
	\end{equation}
\end{definition}

The intuition behind \eqref{eq:mode} is that, if $u$ is a mode, then, for sufficiently small $r$, translating the $r K$-neighbourhood of $u$ does not yield an increase in measure.

Next, we reformulate \cite[Definition 4]{helin_burger_map_estimates_2015_published} of a weak MAP estimator for $\mu$.

\begin{definition}
	\label{def:Eweak_mode}
	Let $E\subset X$.
	An $E$-\emph{weak mode} (or simply \emph{weak mode}) of $\mu$ is any $u \in\supp(\mu)$ such that
	\begin{equation}
		\label{eq:Eweak_mode}
		\lim_{r \downarrow 0} \frac{f(u - v, r)}{f(u, r)} \leq 1 ,\quad\forall v\in E.
	\end{equation} 
\end{definition}

A weak mode $u$ differs from a strong mode in that it considers translations of $K(u, r)$ only by vectors in $E$;
thus every mode is an $E$-weak mode for every $E\subset X$.
We require $u\in\supp(\mu)$ in order to exclude ``$0/0$ problems'':
for example, if $\mu \defeq \tfrac{1}{2} (\mu_1 + \mu_2)$, where $\mu_1$ (resp.\ $\mu_2$) is the standard one-dimensional Gaussian measure on $\{ + 1 \} \times \mathbb{R}$ (resp.\ $\{ - 1 \} \times \mathbb{R}$), and $E \defeq \mathop{\mathrm{span}}(e_{2})$, then $0 \notin \supp(\mu)$ is an $E$-weak mode in the $0/0$ sense.

We now introduce a new definition.
\begin{definition}
	\label{def:supE_weak_mode}
	Let $E\subset X$.
	An $E$-\emph{strong mode} of $\mu$ is any $u \in\supp(\mu)$ such that
	\begin{equation}
		\label{eq:supE_weak_mode}
		\lim_{r \downarrow 0} \frac{\sup_{z \in u - E} f(z, r)}{f(u, r)} \leq 1.
	\end{equation}
\end{definition}

Note that an $X$-strong mode is simply a strong mode.
By comparing \eqref{eq:mode}, \eqref{eq:Eweak_mode}, and \eqref{eq:supE_weak_mode}, and observing that $u-E\subset X$ for any $u\in X$ and $E\subset X$, we have the following chain of implications:
\begin{equation}
	\label{eq:strong_implies_supEweak_implies_Eweak}
	\text{Strong}~\implies~\text{$E$-strong}~\implies~\text{$E$-weak}
\end{equation}

Helin and Burger asked when an $E$-weak mode (for a topologically dense subspace $E$ of some Banach space $X$) is also a strong mode.
In this article, we address their question by investigating the converses of the implications in \eqref{eq:strong_implies_supEweak_implies_Eweak}.

\begin{theorem}
	\label{thm:equivalence_Estrong_strong}
	Under Assumption~\ref{asmp:context}, if $E$ is topologically dense in $X$, then $u$ is an $E$-strong mode if and only if $u$ is a strong mode.
\end{theorem}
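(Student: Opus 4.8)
The one direction, that every strong mode is an $E$-strong mode, is already recorded in \eqref{eq:strong_implies_supEweak_implies_Eweak}, so the plan is to establish the converse. In fact I would prove the sharper fact that, for every fixed $r>0$,
\[
	\sup_{z\in u-E}f(z,r)=\sup_{z\in X}f(z,r);
\]
once this is known, \eqref{eq:mode} and \eqref{eq:supE_weak_mode} impose exactly the same condition on $u$, so the two classes of modes coincide. One first disposes of the degenerate case $u\notin\supp(\mu)$: since $K$ is bounded, $rK$ eventually lies inside any prescribed neighbourhood of the origin, so $f(u,\cdot)$ vanishes near $r=0$, whereas $\sup_{z}f(z,r)>0$ because $\mu\neq 0$; hence such a $u$ is neither a strong nor an $E$-strong mode. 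For all remaining $u$ one has $f(u,r)>0$ for every $r>0$, so the ratios above are genuinely defined.

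The heart of the matter is the claim that, for each fixed $r>0$, the map $z\mapsto f(z,r)=\mu(z+rK)$ is lower semicontinuous. To prove it, fix $r>0$ and a sequence $z_n\to z$ (sequences suffice, as $X$ is first countable). For any $x\in z+rK$ we have $x-z\in rK$, and $rK$ is open because multiplication by $r\neq 0$ is a homeomorphism; since $w\mapsto x-w$ is continuous, $\{w:x-w\in rK\}$ is an open neighbourhood of $z$, so $x\in z_n+rK$ for all large $n$. Therefore $\liminf_n\mathbf 1_{z_n+rK}\ge\mathbf 1_{z+rK}$ pointwise on $X$, and Fatou's lemma gives $\liminf_n f(z_n,r)\ge f(z,r)$. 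Granting this, the density hypothesis finishes the argument: as $E$ is dense and $x\mapsto -x$ and $x\mapsto u+x$ are homeomorphisms, $u-E$ is dense in $X$; given any $w\in X$, pick $z_n\in u-E$ with $z_n\to w$, so that $\sup_{z\in u-E}f(z,r)\ge\liminf_n f(z_n,r)\ge f(w,r)$. Taking the supremum over $w\in X$ yields $\sup_{z\in u-E}f(z,r)\ge\sup_{z\in X}f(z,r)$, and the reverse inequality is trivial since $u-E\subseteq X$.

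I expect the only delicate point to be the lower semicontinuity step, and in particular its reliance on $K$ being \emph{open} rather than merely a bounded neighbourhood of the origin: this is exactly what allows the topological density of $E$ to carry the whole argument, with no regularity, continuity, or differentiability assumptions on $\mu$, and without invoking the uniformity condition \eqref{eq:uniformity_condition} (which is needed only in Theorem~\ref{thm:equivalence_Eweak_Estrong}). Everything else reduces to unwinding Definitions~\ref{def:mode} and \ref{def:supE_weak_mode}.
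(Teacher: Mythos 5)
Your proof is correct and follows essentially the same route as the paper's: lower semicontinuity of $f(\cdot,r)$ via Fatou's lemma applied to the pointwise $\liminf$ of indicators, then density of $u-E$ to show $\sup_{z\in u-E}f(z,r)=\sup_{z\in X}f(z,r)$ for each fixed $r$ (the paper's Lemma~\ref{lem:lower_semicontinuity_of_f} and Proposition~\ref{prop:E_topologically_dense_implies_suprema_of_f_on_r_fibres_coincide}), from which the equivalence is immediate. The only differences are presentational --- you avoid the paper's three-case split in the indicator inequality and its separate finite/infinite treatment of the supremum, and you are slightly more explicit about the $u\notin\supp(\mu)$ edge case --- but the underlying argument is the same.
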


\begin{theorem}
	\label{thm:equivalence_Eweak_Estrong}
	Suppose that Assumption~\ref{asmp:context} holds, and let $E$ be a nonempty subset.
	If the pair $(u,E)$ satisfies the \emph{uniformity condition}
	\begin{align}
		\label{eq:uniformity_condition}
		\exists (v^{\ast},r^{\ast})\in E\times (0,1) &\text{ such that, for all $r \in (0, r^{\ast})$,} \\
		\notag
		f(u-v^\ast,r) &=\sup_{z\in u-E}f(z,r),
	\end{align}
	then $u$ is an $E$-weak mode if and only if $u$ is an $E$-strong mode.
\end{theorem}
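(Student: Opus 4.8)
The plan is to treat the two implications separately; as it turns out, one of them is free and the other is a one-line consequence of the uniformity condition. The implication ``$E$-strong $\implies$ $E$-weak'' is already recorded in the chain \eqref{eq:strong_implies_supEweak_implies_Eweak}, holds for every nonempty $E\subseteq X$, and uses neither the uniformity condition nor anything beyond Assumption~\ref{asmp:context}; so no work is needed there. All the content sits in the converse, and the uniformity condition \eqref{eq:uniformity_condition} is tailor-made for it.

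For the converse I would argue as follows. Suppose $u$ is an $E$-weak mode. By Definition~\ref{def:Eweak_mode} this gives $u\in\supp(\mu)$; and since $K$ is an open neighbourhood of $0$, the set $K(u,r)=u+rK$ is an open neighbourhood of $u$ for every $r>0$, so $f(u,r)=\mu(K(u,r))>0$ and none of the quotients below is indeterminate. Let $(v^{\ast},r^{\ast})\in E\times(0,1)$ be as supplied by \eqref{eq:uniformity_condition}. Then for every $r\in(0,r^{\ast})$,
\begin{equation*}
	\frac{\sup_{z\in u-E}f(z,r)}{f(u,r)}=\frac{f(u-v^{\ast},r)}{f(u,r)} ,
\end{equation*}
so the two ratios have identical behaviour as $r\downarrow 0$. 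Since $v^{\ast}\in E$ and $u$ is an $E$-weak mode, Definition~\ref{def:Eweak_mode} gives $\lim_{r\downarrow 0}f(u-v^{\ast},r)/f(u,r)\leq 1$, whence $\lim_{r\downarrow 0}\sup_{z\in u-E}f(z,r)/f(u,r)\leq 1$; together with $u\in\supp(\mu)$ this is exactly the assertion that $u$ is an $E$-strong mode.

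I do not anticipate a genuine obstacle. The uniformity condition is precisely the hypothesis that, for all sufficiently small radii, the supremum over the translation set $u-E$ appearing in Definition~\ref{def:supE_weak_mode} is attained at the single translate $u-v^{\ast}$, and this collapses the supremal comparison defining an $E$-strong mode onto one of the pointwise comparisons defining an $E$-weak mode. The only points needing care are bookkeeping ones: the requirement $u\in\supp(\mu)$ built into Definitions~\ref{def:Eweak_mode} and \ref{def:supE_weak_mode} is what excludes a $0/0$ ambiguity, and the ``$\lim\,\cdots\leq 1$'' in \eqref{eq:Eweak_mode} and \eqref{eq:supE_weak_mode} should be read consistently throughout (as a limit superior, if one prefers not to presuppose existence of the limit) --- neither affects the argument above. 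The substantive difficulty of the paper's programme lies elsewhere: in Theorem~\ref{thm:equivalence_Estrong_strong}, and in the construction of Example~\ref{example:weak_does_not_imply_weak_uniform}, which shows that the uniformity condition cannot simply be omitted.
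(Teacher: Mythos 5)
Your proposal is correct and follows essentially the same route as the paper: the forward implication is read off from the chain \eqref{eq:strong_implies_supEweak_implies_Eweak}, and the converse uses the uniformity condition to identify $\sup_{z\in u-E}f(z,r)/f(u,r)$ with $f(u-v^{\ast},r)/f(u,r)$ for all $r<r^{\ast}$ (the paper packages this as Lemma~\ref{lem:uniform_plus_Eweak_implies_supEweak}, phrased via the difference of the two limits vanishing, which is the same observation). No gaps.
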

Note that neither Theorem~\ref{thm:equivalence_Estrong_strong} nor Theorem~\ref{thm:equivalence_Eweak_Estrong} make any claim about the existence of modes of any type.
In addition, neither of these theorems requires $E$ to be a subspace of $X$, unlike the assumption on \cite[p.~2]{dashti_law_stuart_voss_map_estimators_2013_published} or \cite[Assumption (A1)]{helin_burger_map_estimates_2015_published}.

Theorems~\ref{thm:equivalence_Estrong_strong} and \ref{thm:equivalence_Eweak_Estrong} yield a positive answer to Helin and Burger's question:

\begin{corollary}
	\label{cor:main_result}
	Under Assumption~\ref{asmp:context}, if $E$ is topologically dense in $X$, and if $(u,E)$ satisfies the uniformity condition \eqref{eq:uniformity_condition}, then $u$ is an $E$-weak mode if and only if $u$ is a strong mode.
\end{corollary}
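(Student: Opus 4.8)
The plan is to obtain Corollary~\ref{cor:main_result} by simply composing Theorem~\ref{thm:equivalence_Estrong_strong} and Theorem~\ref{thm:equivalence_Eweak_Estrong}; the corollary carries no genuinely new analytic content beyond those two results, since all of the real work has already been done in establishing them. Concretely, I would note first that the hypotheses of the corollary supply exactly the hypotheses needed for each theorem: density of $E$ in $X$ is what Theorem~\ref{thm:equivalence_Estrong_strong} requires, and the uniformity condition~\eqref{eq:uniformity_condition} for the pair $(u,E)$ is what Theorem~\ref{thm:equivalence_Eweak_Estrong} requires (the nonemptiness of $E$ demanded there being automatic, as $E$ is dense in the nonempty space $X$).

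Next I would chain the two biconditionals. By Theorem~\ref{thm:equivalence_Eweak_Estrong}, under the uniformity condition, $u$ is an $E$-weak mode if and only if $u$ is an $E$-strong mode. By Theorem~\ref{thm:equivalence_Estrong_strong}, since $E$ is topologically dense in $X$, $u$ is an $E$-strong mode if and only if $u$ is a strong mode. Composing these two equivalences yields that $u$ is an $E$-weak mode if and only if $u$ is a strong mode, which is precisely the assertion of Corollary~\ref{cor:main_result}.

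The only point that warrants a moment's care is the bookkeeping around the membership requirement $u\in\supp(\mu)$, which appears in Definitions~\ref{def:Eweak_mode} and~\ref{def:supE_weak_mode} but not in Definition~\ref{def:mode}. This is not an obstacle: if $u$ is a strong mode then, since $\mu\neq 0$ forces $\sup_{z\in X}f(z,r)>0$ for every $r>0$, the ratio in~\eqref{eq:mode} can converge to $1$ only if $f(u,r)=\mu(K(u,r))>0$ for all sufficiently small $r$, and hence $u\in\supp(\mu)$. Thus the implication ``strong $\implies$ $E$-strong'' recorded in~\eqref{eq:strong_implies_supEweak_implies_Eweak} is legitimate, and the two equivalences compose without any caveat. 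Accordingly, the substantive difficulty of the corollary lies not in the corollary itself but entirely in the proofs of the two theorems, which are carried out in Section~\ref{sec:E_strong_equiv_strong} and Section~\ref{sec:E_weak_equiv_E_strong}; the passage from those theorems to Corollary~\ref{cor:main_result} is purely formal.
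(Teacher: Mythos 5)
Your proposal is correct and matches the paper's intent exactly: Corollary~\ref{cor:main_result} is obtained by composing the biconditionals of Theorem~\ref{thm:equivalence_Eweak_Estrong} and Theorem~\ref{thm:equivalence_Estrong_strong}, which is why the paper states it without a separate proof. Your additional check that a strong mode necessarily lies in $\supp(\mu)$ (so that the chain in \eqref{eq:strong_implies_supEweak_implies_Eweak} composes cleanly) is a correct and worthwhile piece of bookkeeping, but it does not change the argument.
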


On the other hand, \emph{without} the uniformity condition \eqref{eq:uniformity_condition},  Example~\ref{example:weak_does_not_imply_weak_uniform} shows that the answer to Helin and Burger's question is generally \emph{negative}, i.e.\ there exist finite Borel measures --- even on $X = \mathbb{R}^{2}$ --- with weak modes but no strong modes.

\section{The equivalence of $E$-strong and strong modes}
\label{sec:E_strong_equiv_strong}

In this section, we seek conditions for which \eqref{eq:supE_weak_mode} implies \eqref{eq:mode}:
\begin{equation*}
	\lim_{r\downarrow 0}\frac{\sup_{z\in u-E}f(z,r)}{f(u,r)}\leq 1 \implies \lim_{r\downarrow 0}\frac{\sup_{z\in X}f(z,r)}{f(u,r)}=1.
\end{equation*}
In Lemma~\ref{lem:lower_semicontinuity_of_f}, we show that the evaluation map $f$ defined in \eqref{eq:evaluation_map_f} is lower semicontinuous.
We use this powerful property to derive the second key result of this section, Proposition~\ref{prop:E_topologically_dense_implies_suprema_of_f_on_r_fibres_coincide}, which shows that topological density of $E$ suffices for every $E$-strong mode to be a strong mode.
By \eqref{eq:strong_implies_supEweak_implies_Eweak}, this yields the conclusion of Theorem~\ref{thm:equivalence_Estrong_strong}.

We begin this section with a useful definition.
\begin{definition}
	\label{def:coincident_limiting_ratios_condition}
	The \emph{coincident limiting ratios} condition holds for $x $ and $E \subset X$ if
	\begin{equation}
		\label{eq:coincident_limiting_ratios_condition}
		\tag{CLR}
		\lim_{r \downarrow 0} \frac{\sup_{z \in x - E} f(z, r)}{f(x, r)} = \lim_{r \downarrow 0} \frac{\sup_{z \in X} f(z, r)}{f(x, r)}.
	\end{equation}
\end{definition}

\begin{lemma}
	\label{lem:equivalent_definitions_of_weak_modes}
	If $u$ is an $E$-strong mode, and if $u$ and $E$ satisfy \eqref{eq:coincident_limiting_ratios_condition}, then $u$ is a strong mode.
\end{lemma}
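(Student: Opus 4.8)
The plan is to treat this lemma as pure bookkeeping: the three quantities appearing in Definitions~\ref{def:mode} and \ref{def:supE_weak_mode} are ratios with the common denominator $f(u,r)$, and $\sup_{z\in X}f(z,r)$ dominates $\sup_{z\in u-E}f(z,r)$, so combining the $E$-strong hypothesis with \eqref{eq:coincident_limiting_ratios_condition} should pin the $X$-ratio down to the value $1$ demanded by \eqref{eq:mode}. First I would check that the ratios are well-defined: since $u\in\supp(\mu)$ by Definition~\ref{def:supE_weak_mode}, and $K(u,r)=u+rK$ is an open neighbourhood of $u$ (because $K$ is an open neighbourhood of $0$), we have $f(u,r)=\mu(K(u,r))>0$ for every $r>0$. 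Moreover, since $u\in X$ we have $\sup_{z\in X}f(z,r)\geq f(u,r)$, whence $\dfrac{\sup_{z\in X}f(z,r)}{f(u,r)}\geq 1$ for all $r>0$, and therefore $\liminf_{r\downarrow 0}\dfrac{\sup_{z\in X}f(z,r)}{f(u,r)}\geq 1$.

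Next I would combine the hypotheses. Because $u$ is an $E$-strong mode, $\lim_{r\downarrow 0}\dfrac{\sup_{z\in u-E}f(z,r)}{f(u,r)}\leq 1$; by \eqref{eq:coincident_limiting_ratios_condition} the limit $\lim_{r\downarrow 0}\dfrac{\sup_{z\in X}f(z,r)}{f(u,r)}$ exists and equals this same value, hence is $\leq 1$. Together with the lower bound $\geq 1$ from the previous step, this forces $\lim_{r\downarrow 0}\dfrac{\sup_{z\in X}f(z,r)}{f(u,r)}=1$, which is exactly \eqref{eq:mode}, so $u$ is a strong mode. (If one prefers to read the limits in the definitions as $\limsup$, the same two inequalities show that the $\limsup$ and the $\liminf$ of the $X$-ratio both equal $1$, so the limit exists and the conclusion is unchanged.)

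There is no genuine obstacle in this lemma; its role is simply to isolate the condition \eqref{eq:coincident_limiting_ratios_condition} as the real content behind Theorem~\ref{thm:equivalence_Estrong_strong}. The substantive work lies downstream, namely in showing that topological density of $E$ implies \eqref{eq:coincident_limiting_ratios_condition}: there I expect to use the lower semicontinuity of $f$ (Lemma~\ref{lem:lower_semicontinuity_of_f}) to approximate $\sup_{z\in X}f(z,r)$ from below by values $f(z,r)$ with $z$ ranging over the dense set $u-E$, thereby obtaining Proposition~\ref{prop:E_topologically_dense_implies_suprema_of_f_on_r_fibres_coincide} and, via \eqref{eq:strong_implies_supEweak_implies_Eweak}, Theorem~\ref{thm:equivalence_Estrong_strong}.
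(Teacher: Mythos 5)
Your argument is correct and is essentially the paper's own proof: combine the $E$-strong hypothesis with \eqref{eq:coincident_limiting_ratios_condition} to bound the limit in \eqref{eq:mode} by $1$, then note that the ratio $\sup_{z\in X}f(z,r)/f(u,r)\geq 1$ forces equality. Your additional remarks on positivity of $f(u,r)$ and the $\limsup$/$\liminf$ reading are sensible elaborations of the same one-line observation the paper makes.
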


\begin{proof}
	By Definition~\ref{def:supE_weak_mode} and \eqref{eq:coincident_limiting_ratios_condition}, the limit in \eqref{eq:mode} is less than or equal to 1.
	Since the limit cannot be strictly less than 1, the result follows.
\end{proof}

We now show the lower semicontinuity of the evaluation map $f(\quark,r) \colon X\to\mathbb{R}_{\geq 0}$, for any $r>0$.
We shall use this property to prove Theorem~\ref{thm:equivalence_Estrong_strong}.
\begin{lemma}
	\label{lem:lower_semicontinuity_of_f}
	For arbitrary $r>0$, $f(\quark, r)$ is lower semicontinuous on $X$.
\end{lemma}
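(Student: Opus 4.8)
The plan is to exploit that $K$ --- and hence $rK$ --- is open, together with the fact that $X$ is first countable. Since first countability makes lower semicontinuity equivalent to its sequential counterpart, it suffices to fix $x_{0} \in X$ and a sequence $(x_{n})_{n \in \mathbb{N}}$ in $X$ with $x_{n} \to x_{0}$, and to show that $\liminf_{n \to \infty} f(x_{n}, r) \geq f(x_{0}, r)$. Writing $f(x, r) = \mu(x + rK) = \int_{X} \mathbf{1}_{x + rK}(y) \, \mu(\rd y)$, this reduces the claim to a statement about the integrands $\mathbf{1}_{x_{n} + rK}$, which I will handle with Fatou's lemma.

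The key pointwise fact is that $\liminf_{n \to \infty} \mathbf{1}_{x_{n} + rK}(y) \geq \mathbf{1}_{x_{0} + rK}(y)$ for every $y \in X$. Indeed, if $y \in x_{0} + rK$, then $y - x_{0}$ belongs to the open set $rK$; continuity of subtraction in $X$ gives $y - x_{n} \to y - x_{0}$, so $y - x_{n} \in rK$, i.e.\ $\mathbf{1}_{x_{n} + rK}(y) = 1$, for all sufficiently large $n$, whence $\liminf_{n\to\infty} \mathbf{1}_{x_{n} + rK}(y) = 1$. For $y \notin x_{0} + rK$ the inequality is trivial. Since each $x_{n} + rK$ is open, hence Borel, the functions $\mathbf{1}_{x_{n} + rK}$ are nonnegative and measurable, so Fatou's lemma applies (no $\sigma$-finiteness of $\mu$ is needed) and yields
\begin{align*}
	\liminf_{n \to \infty} f(x_{n}, r) = \liminf_{n \to \infty} \int_{X} \mathbf{1}_{x_{n} + rK} \, \rd\mu &\geq \int_{X} \liminf_{n \to \infty} \mathbf{1}_{x_{n} + rK} \, \rd\mu \\
	&\geq \int_{X} \mathbf{1}_{x_{0} + rK} \, \rd\mu = f(x_{0}, r) .
\end{align*}
This is exactly lower semicontinuity of $f(\quark, r)$ at $x_{0}$.

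I expect the only subtle point to be the appeal to first countability: it is what allows lower semicontinuity to be tested along sequences, and it cannot be dispensed with in this argument, since Fatou's lemma fails for nets in general. The remaining ingredients are routine: the measurability of $\mathbf{1}_{x_{n} + rK}$ noted above, and the continuity of $x \mapsto y - x$ in the topological vector space $X$. Finally, it is instructive to observe that the openness of $K$ is precisely what forces the inequality to point in the direction of \emph{lower} semicontinuity; had $K$ been closed instead, the analogous pointwise bound $\limsup_{n\to\infty} \mathbf{1}_{x_{n} + rK}(y) \leq \mathbf{1}_{x_{0} + rK}(y)$ together with the reverse Fatou lemma --- legitimately applicable because $\mu$ is finite on the bounded set $(\{x_{n} : n \in \mathbb{N}\} \cup \{x_{0}\}) + rK$, which dominates all the indicators --- would instead give \emph{upper} semicontinuity of $f(\quark, r)$.
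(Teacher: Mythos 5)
Your proof is correct and follows essentially the same route as the paper's: establish the pointwise inequality $\liminf_{n}\mathbb{I}_{x_{n}+rK}(y)\geq\mathbb{I}_{x_{0}+rK}(y)$ using openness of $rK$ and continuity of translation, then apply Fatou's lemma. Your handling of the case $y\notin x_{0}+rK$ is in fact slightly more economical --- you observe the inequality is trivial there, whereas the paper separately treats $y\notin\cl{K(x,r)}$ and $y\in\partial K(x,r)$ --- but this is a cosmetic difference, not a different argument.
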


\begin{proof}
	Fix an arbitrary $x \in X$ and an arbitrary sequence $(x_{n})_{n \in \mathbb{N}}$ converging to $x$.
	To prove the desired statement, it suffices to show that 
	\begin{equation}
		\label{eq:indicator_Kxr_of_y_leq_liminf_indicator_Kxnr_of_y_for_all_y}
		\forall y \in X, \quad
		\mathbb{I}_{K(x, r)}(y) \leq \liminf_{n\to\infty} \mathbb{I}_{K(x_{n}, r)}(y),
	\end{equation}
	by considering separately the cases of when $y \in K(x, r)$, when $y \notin \cl{K(x, r)}$, and when $y \in \partial K(x, r)$.
	Given \eqref{eq:indicator_Kxr_of_y_leq_liminf_indicator_Kxnr_of_y_for_all_y}, it follows from \eqref{eq:evaluation_map_f} and Fatou's lemma that
	\begin{equation*}
		f(x, r) \leq \int_{X}\liminf_{n\to\infty}\mathbb{I}_{K(x_{n}, r)}(y) \, \mu(\rd y) \leq \liminf_{n\to\infty} \int_{X}\mathbb{I}_{K(x_{n}, r)}(y) \, \mu(\rd y).
	\end{equation*}
	Since the rightmost term is equal to $\liminf_{n\to\infty} f(x_{n}, r)$, it follows that $f(\quark, r)$ is lower semicontinuous at $x$, for arbitrary $x \in X$.
	It remains only to establish \eqref{eq:indicator_Kxr_of_y_leq_liminf_indicator_Kxnr_of_y_for_all_y}.

	Let $y \in K(x, r)$ be arbitrary.
	Since $y \in K(x, r)$, and since $K(x, r)$ is open, there exists a bounded $U\in\mathcal{N}(0)$ such that, for all $v \in U$, $y - v \in K(x, r)$.
	Since $y - v \in K(x, r)$ if and only if $y \in K(x + v, r)$, it follows that $z \in (x + U) \implies y \in K(z, r)$.
	Since $x_{n} \to x$ as $n \to \infty$, there exists some $N \in \mathbb{N}$ such that $x_{n} \in (x + U)$ for all $n \geq N$.
	Then $\mathbb{I}_{K(x_{n}, r)}(y)=1$ for $n \geq N$, and
	\begin{equation}
		\label{eq:prel01}
		\forall y \in K(x, r), \quad
		\mathbb{I}_{K(x_{n}, r)}(y) \to \mathbb{I}_{K(x, r)}(y).
	\end{equation}
	Now let $y \notin \cl{K(x, r)}$ be arbitrary.
	Since $\cl{K(x, r)}$ is closed, there exists a bounded $U'\in\mathcal{N}(0)$ such that, for all $v \in U'$, $y - v \notin \cl{K(x, r)}$.
	Since $y - v \notin \cl{K(x, r)}$ if and only if $y \notin \cl{K(x + v, r)}$, it follows that $z \in (x + U') \implies y \notin \cl{K(z, r)}$.
	Since $x_{n} \to x$, there exists some $N \in \mathbb{N}$ such that $x_{n} \in x + U'$ for all $n \geq N$.
	This implies that $\mathbb{I}_{K(x_{n}, r)}(y)=0$ for all $n \geq N$.
	Thus,
	\begin{equation}
		\label{eq:prel02}
		\forall y \notin\cl{K(x, r)}, \quad
		\mathbb{I}_{K(x_{n}, r)}(y) \to \mathbb{I}_{K(x, r)}(y).
	\end{equation}
	Observe that $\liminf_{n\to\infty}\mathbb{I}_{K(x_{n}, r)}(y)$ is either 0 or 1, because $\mathbb{I}_{K(x_{n}, r)}(y)$ is either 0 or 1.
	On the other hand, since $K(x, r)$ is open, $\mathbb{I}_{K(x, r)}(y) = 0$ for every $y \in \partial K(x, r)$.
	Thus, $\mathbb{I}_{K(x, r)} \leq \liminf_{n\to\infty} \mathbb{I}_{K(x_{n}, r)}$ on $\partial K(x, r)$.
\end{proof}

We now use lower semicontinuity to show that topological density of $E$ in $X$ suffices for any pair $(x,E)$ to satisfy \eqref{eq:coincident_limiting_ratios_condition}.

\begin{proposition}
	\label{prop:E_topologically_dense_implies_suprema_of_f_on_r_fibres_coincide}
	Suppose that Assumption~\ref{asmp:context} holds.
	If $E$ is topologically dense in $X$, then $\sup_{z \in X} f(z, r) = \sup_{z \in x - E} f(z, r)$ for all $(x, r)$.
	In particular, \eqref{eq:coincident_limiting_ratios_condition} holds for any $x\in X$.
\end{proposition}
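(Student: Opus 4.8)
The plan is to reduce the statement to the elementary principle that a lower semicontinuous $\mathbb{R}_{\geq 0}$-valued function cannot have a larger supremum over the whole space than over any topologically dense subset. First I would recall that, by Assumption~\ref{asmp:context} (as noted after \eqref{eq:evaluation_map_f}), $f(\quark, r)$ is real-valued, and that by Lemma~\ref{lem:lower_semicontinuity_of_f} it is lower semicontinuous on $X$ for each fixed $r > 0$.

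Next I would observe that, for fixed $x \in X$, the set $x - E$ is topologically dense in $X$: the map $z \mapsto x - z$ is a homeomorphism of $X$ onto itself (in any topological vector space both translation and negation are homeomorphisms), and homeomorphisms send dense sets to dense sets. It therefore suffices to prove the general claim that if $g \colon X \to \mathbb{R}_{\geq 0}$ is lower semicontinuous and $D \subset X$ is dense, then $\sup_{z \in D} g(z) = \sup_{z \in X} g(z)$. The inequality $\leq$ is immediate from $D \subset X$. For $\geq$, fix $w \in X$ and any real $\alpha < g(w)$; lower semicontinuity of $g$ at $w$ yields a $V \in \mathcal{N}(w)$ with $g > \alpha$ on $V$, and density of $D$ gives a point of $V \cap D$, so $\sup_{z \in D} g(z) > \alpha$; letting $\alpha \uparrow g(w)$ gives $\sup_{z \in D} g(z) \geq g(w)$, and then taking the supremum over $w \in X$ completes the argument. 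Applying this with $g = f(\quark, r)$ and $D = x - E$ yields $\sup_{z \in X} f(z, r) = \sup_{z \in x - E} f(z, r)$ for every $(x, r)$.

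Finally, for the ``in particular'' assertion, the two functions of $r$ whose $r \downarrow 0$ limits appear in \eqref{eq:coincident_limiting_ratios_condition}, namely $r \mapsto \sup_{z \in x - E} f(z, r)/f(x, r)$ and $r \mapsto \sup_{z \in X} f(z, r)/f(x, r)$, are identical because their numerators agree for every $r$ by what was just shown; hence their limits as $r \downarrow 0$ coincide, which is exactly \eqref{eq:coincident_limiting_ratios_condition}, for arbitrary $x \in X$.

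I do not expect a genuine obstacle: essentially all the work has already been done in Lemma~\ref{lem:lower_semicontinuity_of_f}, and what remains is the one-line supremum-recovery argument for lower semicontinuous functions together with the bookkeeping that $x - E$ is dense whenever $E$ is. The only points deserving a moment's care are that $f(\quark, r)$ is finite-valued (so that the argument is not vacuous) and that density is preserved under the homeomorphism $z \mapsto x - z$.
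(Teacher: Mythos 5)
Your proposal is correct and follows essentially the same route as the paper: both rest on the lower semicontinuity of $f(\quark,r)$ from Lemma~\ref{lem:lower_semicontinuity_of_f} together with the fact that the dense set $x-E$ must meet every nonempty open superlevel set. Your packaging as a single general claim (a lower semicontinuous function has equal suprema over the space and over any dense subset, proved pointwise via $\alpha < g(w)$) is a slightly cleaner presentation, since it handles the finite and infinite supremum cases uniformly rather than splitting into cases as the paper does.
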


\begin{proof}
	The second conclusion follows immediately from the first, so it suffices to prove the first conclusion.
	Let $(x, r)\in X\times \mathbb{R}_{>0} $ be arbitrary, and suppose that $\sup_{z\in X}f(z,r)=+\infty$.
	Let $M\in\mathbb{N}$ be arbitrary.
	Recall that Lemma~\ref{lem:lower_semicontinuity_of_f} yields the lower semicontinuity of $f(\quark,r)$.
	Given lower semicontinuity of $f(\quark,r)$, the set $\{z\in X\ \vert\ f(z,r)>M\}$ is open; the set is also nonempty since $\sup_{z\in X}f(z,r)=+\infty$.
	Since the topological density of $E$ in $X$ implies the topological density of $x-E$ in $X$, it follows that there exist $z\in x-E$ with $f(z,r)>M$.
	Since $M$ was arbitrary, it follows that $\sup_{z\in x-E}f(z,r)=+\infty$ as well.
	
	Now suppose that $\sup_{z\in X}f(z,r)$ is finite, and let $(x_{n})_{n\in\mathbb{N}}\subset X$ be such that $f(x_{n},r)\in\mathbb{R}$ and $f(x_{n},r)\to \sup_{z\in X}f(z,r)$.
	It holds that $\{z\in X\ \vert\ f(z,r)>f(x_{n},r)\}$ is open and nonempty.
	Since $x-E$ is topologically dense in $X$, there exist $z\in x-E$ such that $f(z,r)>f(x_{n},r)$.
	Since $f(x_{n},r)\to \sup_{z\in X}f(z,r)$, this implies that $\sup_{z\in x-E}f(z,r)\geq \sup_{z\in X}f(z,r)$.
\end{proof}

Theorem~\ref{thm:equivalence_Estrong_strong} follows from the preceding observations.

\begin{proof}[Proof of Theorem~\ref{thm:equivalence_Estrong_strong}]
	By \eqref{eq:strong_implies_supEweak_implies_Eweak}, every strong mode is an $E$-strong mode for any subset $E \subset X$.
	For the converse, since $E$ is topologically dense in $X$, Proposition~\ref{prop:E_topologically_dense_implies_suprema_of_f_on_r_fibres_coincide} implies that \eqref{eq:coincident_limiting_ratios_condition} holds for $u$ and $E$.
	Applying the hypothesis that $u$ is an $E$-strong mode and Lemma~\ref{lem:equivalent_definitions_of_weak_modes} completes the proof.
\end{proof}

For the sake of completeness, we show that, under the hypotheses that $E$ is topologically dense in $X$ and $u$ is an $E$-strong mode, it holds that $u$ is a strong mode if and only if the coincident limiting ratios condition holds for $u$ and $E$.
We shall use the following lemma:

\begin{lemma}\label{lem:u_minus_E_limiting_ratio_geq_1_and_mode_implies_coincident_limiting_ratios}
	Let $u$ be an $E$-strong mode, and suppose that $u$ and $E$ satisfy
 	\begin{equation}
		\label{eq:u_minus_E_limiting_ratio_geq_1}
		1\leq \lim_{r \downarrow 0} \frac{\sup_{z \in u - E} f(z, r)}{f(u, r)}.
	\end{equation}
	If $u$ is also a mode, then $u$ satisfies \eqref{eq:coincident_limiting_ratios_condition}.
\end{lemma}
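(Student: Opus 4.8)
The plan is to observe that the three hypotheses together pin both sides of \eqref{eq:coincident_limiting_ratios_condition} to the value $1$, so that \eqref{eq:coincident_limiting_ratios_condition} holds trivially. Concretely, I would first record that since $u$ is an $E$-strong mode, Definition~\ref{def:supE_weak_mode} requires $u\in\supp(\mu)$; as $K(u,r)=u+rK$ is an open neighbourhood of $u$ for every $r>0$, this gives $f(u,r)=\mu(K(u,r))>0$, so every ratio occurring in the statement (and in \eqref{eq:mode}, \eqref{eq:supE_weak_mode}, \eqref{eq:u_minus_E_limiting_ratio_geq_1}, \eqref{eq:coincident_limiting_ratios_condition}) is well defined.

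Next I would combine the inequalities. From $u$ being an $E$-strong mode, \eqref{eq:supE_weak_mode} gives
$\lim_{r\downarrow 0}\frac{\sup_{z\in u-E}f(z,r)}{f(u,r)}\leq 1$, while the hypothesis \eqref{eq:u_minus_E_limiting_ratio_geq_1} gives the reverse inequality; hence this limit exists and equals $1$, which is the left-hand side of \eqref{eq:coincident_limiting_ratios_condition}. Then, since $u$ is also a mode, \eqref{eq:mode} gives that the right-hand side of \eqref{eq:coincident_limiting_ratios_condition}, namely $\lim_{r\downarrow 0}\frac{\sup_{z\in X}f(z,r)}{f(u,r)}$, equals $1$. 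Therefore both sides of \eqref{eq:coincident_limiting_ratios_condition} equal $1$, so \eqref{eq:coincident_limiting_ratios_condition} holds for $u$ and $E$, as claimed.

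I do not expect any genuine obstacle here: the lemma is a bookkeeping consequence of chaining the defining (in)equalities of the strong mode, the $E$-strong mode, and the extra hypothesis \eqref{eq:u_minus_E_limiting_ratio_geq_1}. The only point that merits an explicit word is the well-definedness of all the ratios, which follows immediately from $u\in\supp(\mu)$ as noted above; one might also remark that existence of the limit on the left-hand side of \eqref{eq:coincident_limiting_ratios_condition} is part of the conclusion and is obtained precisely by sandwiching between the two bounds.
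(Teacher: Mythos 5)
Your proof is correct and is essentially the paper's argument run in the direct rather than contrapositive direction: the paper assumes \eqref{eq:coincident_limiting_ratios_condition} fails and combines \eqref{eq:u_minus_E_limiting_ratio_geq_1} with $\sup_{z\in u-E}f(z,r)\leq\sup_{z\in X}f(z,r)$ to conclude $u$ is not a mode, while you pin both limits to $1$ directly. Your remarks on well-definedness of the ratios (via $u\in\supp(\mu)$) and on existence of the left-hand limit are sound and, if anything, slightly more careful than the paper's one-line proof.
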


\begin{proof}
	We prove the contrapositive.
	Suppose that \eqref{eq:coincident_limiting_ratios_condition} does not hold; then
	\begin{equation*}
		1\leq \lim_{r \downarrow 0} \frac{\sup_{z \in x - E} f(z, r)}{f(x, r)} < \lim_{r \downarrow 0} \frac{\sup_{z \in X} f(z, r)}{f(x, r)}
	\end{equation*}
	and by \eqref{eq:mode} it follows that $u$ is not a mode.
\end{proof}

To establish a sufficient condition for the hypothesis \eqref{eq:u_minus_E_limiting_ratio_geq_1} to hold, we use the lower semicontinuity of $f(\quark,r)$ for any $r>0$, and the fact that Assumption~\ref{asmp:context} implies $X$ is a metrisable space.

\begin{proposition}
	\label{prop:W_E_geq_1}
	Let $u \in \supp(\mu)$ and $\emptyset \neq E \subset X$.
	Suppose Assumption~\ref{asmp:context} holds.
	If either $E$ contains the origin or is topologically dense in a neighbourhood of the origin, then $u$ and $E$ satisfy \eqref{eq:u_minus_E_limiting_ratio_geq_1}.
\end{proposition}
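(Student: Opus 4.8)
The plan is to show that each of the two hypotheses on $E$ forces the origin into the topological closure $\cl{E}$ of $E$, and then to combine this fact with the lower semicontinuity of $f(\quark, r)$ from Lemma~\ref{lem:lower_semicontinuity_of_f} and with $u \in \supp(\mu)$. Concretely, if $0 \in E$ then trivially $0 \in \cl{E}$; and if $E$ is topologically dense in some $W \in \mathcal{N}(0)$, then $0 \in W \subseteq \cl{E}$. Hence in either case $0 \in \cl{E}$, and since the map $x \mapsto u - x$ is a homeomorphism of $X$, this gives $u = u - 0 \in \cl{u - E}$. Because Assumption~\ref{asmp:context} makes $X$ metrisable, I can then fix a sequence $(z_{n})_{n \in \mathbb{N}} \subset u - E$ with $z_{n} \to u$.

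Next, for an arbitrary fixed $r > 0$, lower semicontinuity of $f(\quark, r)$ at $u$ (Lemma~\ref{lem:lower_semicontinuity_of_f}) gives $f(u, r) \leq \liminf_{n \to \infty} f(z_{n}, r)$. Thus for every $\varepsilon > 0$ there is some $n$ with $f(z_{n}, r) > f(u, r) - \varepsilon$; as $z_{n} \in u - E$, this yields $\sup_{z \in u - E} f(z, r) \geq f(u, r) - \varepsilon$, and letting $\varepsilon \downarrow 0$ gives $\sup_{z \in u - E} f(z, r) \geq f(u, r)$. Moreover, since $u \in \supp(\mu)$ and $K(u, r) = u + r K$ is an open neighbourhood of $u$ (translations and the dilation by $r > 0$ being homeomorphisms of $X$), we have $f(u, r) = \mu(K(u, r)) > 0$. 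Therefore $\sup_{z \in u - E} f(z, r) / f(u, r) \geq 1$ for every $r > 0$, and passing to the limit $r \downarrow 0$ establishes \eqref{eq:u_minus_E_limiting_ratio_geq_1}.

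The only delicate point, and the single place where more than bookkeeping is needed, is the case $0 \notin E$: then $u \notin u - E$, so the inequality $\sup_{z \in u - E} f(z, r) \geq f(u, r)$ cannot be read off by evaluating at one point, and one genuinely uses the lower semicontinuity of $f(\quark, r)$ along the approximating sequence $z_{n} \to u$ (which in turn relies on the metrisability of $X$ afforded by Assumption~\ref{asmp:context}). One must also verify $f(u, r) > 0$ so that the quotient in \eqref{eq:u_minus_E_limiting_ratio_geq_1} is well-defined, and this is precisely where the hypothesis $u \in \supp(\mu)$ enters.
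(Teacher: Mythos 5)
Your proof is correct and follows essentially the same route as the paper's: the case $0 \in E$ is handled by direct evaluation at $u \in u - E$, and the density case uses an approximating sequence in $u - E$ converging to $u$ (via metrisability) together with the lower semicontinuity of $f(\quark, r)$ from Lemma~\ref{lem:lower_semicontinuity_of_f} to get $\sup_{z \in u - E} f(z, r) \geq f(u, r)$. Your only cosmetic deviation is unifying the two hypotheses through the single observation $0 \in \cl{E}$, and you are slightly more explicit than the paper in checking $f(u, r) > 0$ from $u \in \supp(\mu)$; both are fine.
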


\begin{proof}
	Suppose $0\in E$.
	Then $u \in u - E$, and $f(u, r) \leq \sup_{z \in u - E} f(z, r)$ for all $r$.
	
	Suppose that $E$ is topologically dense in a neighbourhood $V$ of the origin.
	Then $u - E$ is dense in $u - V$, and there exists a sequence of points in $(u - E)\cap (u - V)$ that converges to $u$.
	Let $r>0$ be arbitrary.
	By Lemma~\ref{lem:lower_semicontinuity_of_f}, it follows that $f(\quark,r)$ is lower semicontinuous at $u$.
	Using lower semicontinuity of $f(\quark, r)$ at $u$, and using the fact that $X$ is a metrisable space, it follows that $\liminf_{n\to\infty} f(u_{n},r)\geq f(u,r)$.
	Since $(u_{n})_{n\in\mathbb{N}}\subset u-E$, it follows that $\sup_{z\in u-E}f(z,r)\geq f(u,r)$.
\end{proof}

\begin{corollary}
	Let $E$ be topologically dense in $X$, and let $u\in\supp(\mu)$ be an $E$-strong mode.
	Then $u$ is a strong mode if and only if \eqref{eq:coincident_limiting_ratios_condition} holds for $u$ and $E$.
\end{corollary}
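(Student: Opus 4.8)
The plan is to establish the two implications separately, each as a short deduction from the results already in hand. For the ``if'' direction I would assume that \eqref{eq:coincident_limiting_ratios_condition} holds for $u$ and $E$; since $u$ is an $E$-strong mode by hypothesis, Lemma~\ref{lem:equivalent_definitions_of_weak_modes} then applies verbatim and gives that $u$ is a strong mode. Note that this implication uses neither the density of $E$ nor anything beyond what is already built into the definitions.

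For the ``only if'' direction I would assume that $u$ is a strong mode and aim to verify \eqref{eq:coincident_limiting_ratios_condition}. The key observation is that topological density of $E$ in $X$ makes $E$ topologically dense in a neighbourhood of the origin --- one may take the neighbourhood to be $X$ itself --- so that Proposition~\ref{prop:W_E_geq_1} applies to the pair $(u,E)$ (recalling that $u\in\supp(\mu)$) and yields \eqref{eq:u_minus_E_limiting_ratio_geq_1}. With \eqref{eq:u_minus_E_limiting_ratio_geq_1} in hand, together with the hypotheses that $u$ is an $E$-strong mode and --- for this direction --- a strong mode, Lemma~\ref{lem:u_minus_E_limiting_ratio_geq_1_and_mode_implies_coincident_limiting_ratios} delivers \eqref{eq:coincident_limiting_ratios_condition} for $u$ and $E$, completing the proof.

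I do not expect a genuine obstacle: the whole argument is an assembly of the preceding lemmas, and the only point deserving a word of care is the (elementary) verification that density of $E$ in $X$ supplies the hypothesis of Proposition~\ref{prop:W_E_geq_1}. It is worth recording that the ``only if'' direction can also be obtained more directly, without invoking Proposition~\ref{prop:W_E_geq_1} or Lemma~\ref{lem:u_minus_E_limiting_ratio_geq_1_and_mode_implies_coincident_limiting_ratios}: Proposition~\ref{prop:E_topologically_dense_implies_suprema_of_f_on_r_fibres_coincide} already shows that \eqref{eq:coincident_limiting_ratios_condition} holds for every $x\in X$ whenever $E$ is topologically dense in $X$, so in particular the ``only if'' hypothesis that $u$ be a strong mode is not actually needed (and, by Theorem~\ref{thm:equivalence_Estrong_strong}, it holds automatically here). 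I would nonetheless present the route through the two lemmas in the main text, since it mirrors the surrounding discussion and makes explicit which hypotheses each half of the equivalence consumes.
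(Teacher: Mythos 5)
Your proof is correct and follows essentially the same route as the paper: the ``if'' direction via Lemma~\ref{lem:equivalent_definitions_of_weak_modes}, and the ``only if'' direction via Proposition~\ref{prop:W_E_geq_1} (whose hypothesis is supplied by density of $E$) feeding into Lemma~\ref{lem:u_minus_E_limiting_ratio_geq_1_and_mode_implies_coincident_limiting_ratios}. Your side remark that Proposition~\ref{prop:E_topologically_dense_implies_suprema_of_f_on_r_fibres_coincide} already forces \eqref{eq:coincident_limiting_ratios_condition} unconditionally under density of $E$ is also accurate and worth noting, but it does not change the substance of the argument.
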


\begin{proof}
	Sufficiency of \eqref{eq:coincident_limiting_ratios_condition} for $u$ to be a mode follows from Lemma~\ref{lem:equivalent_definitions_of_weak_modes}.
	Necessity follows from Lemma~\ref{lem:u_minus_E_limiting_ratio_geq_1_and_mode_implies_coincident_limiting_ratios}, which we may apply given that the topological density of $E$ ensures that we may apply Proposition~\ref{prop:W_E_geq_1}.
\end{proof}

\section{The equivalence of $E$-weak and $E$-strong modes}
\label{sec:E_weak_equiv_E_strong}

In this section, we seek conditions for which \eqref{eq:Eweak_mode} implies \eqref{eq:supE_weak_mode}, i.e.
\begin{equation*}
	\left( \lim_{r\downarrow 0}\frac{f(u-v,r)}{f(u,r)}\leq 1,\quad\forall v\in E \right) \implies \lim_{r\downarrow 0}\frac{\sup_{z\in u-E}f(z,r)}{f(u,r)}\leq 1.
\end{equation*}
A recurring observation in this section is that finding general conditions for the relation above to hold is relatively easier when $X$ is finite-dimensional, compared to when $X$ is infinite-dimensional.
When $X$ is infinite-dimensional, one such condition is given by the uniformity condition \eqref{eq:uniformity_condition}, which has the interpretation that there exists some point $u-v^\ast \in u-E$ such that, for all sufficiently small $r$, the $\mu$-measure of $K(u-v^\ast,r)$ dominates that of $K(u-v,r)$ for all $v\in E$. 

The following proposition provides sufficient conditions for a weak mode to be $E$-strong, in the case when $X$ is a finite-dimensional vector space:

\begin{proposition}
	\label{prop:equivalent_definitions_of_weak_modes_finite_dim}
	Let $X=\mathbb{R}^n$, let $E$ be a dense proper subset of $X$, let $K(0,1)=B_p(0,1)$ be the unit $\ell_p$-ball centred at the origin for some $0<p\leq \infty$, and let $\mu$ admit a continuous density $g$ with respect to the Lebesgue measure $\lambda$.
	If $u$ is an $E$-weak mode and $g(u)>0$, then $u$ is a strong mode, and thus also an $E$-strong mode.
\end{proposition}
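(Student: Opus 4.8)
The plan is to exploit finite-dimensionality and the continuous Lebesgue density to compute the small-radius behaviour of $f$ exactly. The first step is the pointwise asymptotic
\[
	\lim_{r\downarrow 0}\frac{f(z,r)}{r^{n}}=g(z)\,\lambda(K)\qquad\text{for every }z\in X,
\]
which I would obtain from the substitution $y=z+rw$, giving $f(z,r)=\int_{z+rK}g(y)\,\lambda(\rd y)=r^{n}\int_{K}g(z+rw)\,\lambda(\rd w)$, together with the dominated convergence theorem: for $r\in(0,1)$ the functions $w\mapsto g(z+rw)$ on $K$ are dominated by the (finite) supremum of the continuous function $g$ over a fixed bounded set containing $\bigcup_{0<r<1}(z+rK)$, times the $\lambda$-integrable function $\mathbb{I}_{K}$, and they converge pointwise to $g(z)$ by continuity of $g$; since $0<\lambda(K)<\infty$ because $K$ is a nonempty bounded open set, the claim follows. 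The only features of $K$ used here are those already guaranteed by Assumption~\ref{asmp:context}; the specific $\ell_{p}$ structure is not needed.

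Second, I would combine this asymptotic with the $E$-weak mode hypothesis to show that $g$ attains its global maximum at $u$. For each $v\in E$, dividing numerator and denominator in \eqref{eq:Eweak_mode} by $r^{n}$ and applying the asymptotic at $z=u-v$ and at $z=u$ (which is legitimate since $g(u)>0$) yields
\[
	\frac{g(u-v)}{g(u)}=\lim_{r\downarrow 0}\frac{f(u-v,r)}{f(u,r)}\leq 1,
\]
so $g(u-v)\leq g(u)$ for every $v\in E$. Since $E$ is dense in $X$, the set $u-E$ is dense in $X$, and since $g$ is continuous, $g(z)\leq g(u)$ for all $z\in X$.

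Third, I would close the argument by a squeeze. The trivial bound $\sup_{z\in X}f(z,r)\geq f(u,r)$ shows that the ratio in \eqref{eq:mode} is always at least $1$. For the matching upper bound, $g\leq g(u)$ everywhere gives, for every $z\in X$ and every $r>0$,
\[
	f(z,r)=\int_{z+rK}g(y)\,\lambda(\rd y)\leq g(u)\,\lambda(z+rK)=g(u)\,r^{n}\lambda(K),
\]
hence $\sup_{z\in X}f(z,r)\leq g(u)\,r^{n}\lambda(K)$, and therefore
\[
	1\leq\frac{\sup_{z\in X}f(z,r)}{f(u,r)}\leq\frac{g(u)\,r^{n}\lambda(K)}{f(u,r)}.
\]
By the pointwise asymptotic at $z=u$, the right-hand side tends to $g(u)\lambda(K)/(g(u)\lambda(K))=1$ as $r\downarrow 0$ (using $g(u)\lambda(K)>0$). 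Thus the limit in \eqref{eq:mode} exists and equals $1$, i.e.\ $u$ is a strong mode; that $u$ is then also an $E$-strong mode is immediate from \eqref{eq:strong_implies_supEweak_implies_Eweak}.

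The step carrying the real content is the uniform upper bound $\sup_{z\in X}f(z,r)\leq g(u)\,r^{n}\lambda(K)$: it is what lets a merely pointwise asymptotic of $f$ control the supremum of $f$ over all of $X$, and it relies on $g$ being globally bounded by $g(u)$, which is precisely where density of $E$ and continuity of $g$ enter. Finite-dimensionality is used through the scaling identity $\lambda(z+rK)=r^{n}\lambda(K)$ and, more essentially, through the availability of a translation-invariant reference measure; in infinite dimensions no such device exists, which is why a hypothesis like the uniformity condition \eqref{eq:uniformity_condition} is needed there instead.
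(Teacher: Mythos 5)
Your proof is correct and follows essentially the same route as the paper's: use the small-ball ratio to deduce $g(u-v)\leq g(u)$ for all $v\in E$, upgrade this to $g\leq g(u)$ on all of $X$ by density of $E$ and continuity of $g$, and then use translation invariance of Lebesgue measure to bound $\sup_{z\in X}f(z,r)$ uniformly and squeeze the limit in \eqref{eq:mode} to $1$. The only (cosmetic) difference is that you derive the pointwise limit $f(z,r)/(r^{n}\lambda(K))\to g(z)$ directly by dominated convergence from the continuity of $g$, whereas the paper invokes the Lebesgue differentiation theorem; your variant has the minor advantage of working verbatim for any bounded open $K$ without appealing to bounded eccentricity, a point the paper only addresses in Remark~\ref{rem:condition_on_set_K_for_finite_dimensions}.
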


\begin{remark}
	\label{rem:condition_on_set_K_for_finite_dimensions}
	Note that the restriction to $\ell_p$-balls is unnecessary. We may choose $K(0,1)$ to be any bounded set, since the family of $r$-dilates of $K(0,1)$ will satisfy the property of bounded eccentricity; this property suffices for the application of the Lebesgue differentiation theorem \cite[Chapter 3, Corollary 1.7]{stein}.
\end{remark}

\begin{proof}[Proof of Proposition~\ref{prop:equivalent_definitions_of_weak_modes_finite_dim}]
	Fix an arbitrary $v\in E$.
	Observe that
	\begin{equation*}
		\lim_{r\downarrow 0}\frac{f(u-v,r)}{f(u,r)}=\lim_{r\downarrow 0}\frac{\mu(B_2(u-v,r))}{\mu(B_2(u,r))}=\lim_{r\downarrow 0}\frac{\mu(B_2(u-v,r))}{\lambda(B_2(u-v,r))}~\frac{\lambda(B_2(u-v,r))}{\mu(B_2(u,r))}.
	\end{equation*}
	Since the Lebesgue measure is translation-invariant, it follows that $\lambda(B_2(u-v,r))=\lambda(B_2(u,r))$.
	Using this and the product rule for convergent real sequences, we have 
	\begin{equation*}
		\lim_{r\downarrow 0}\frac{f(u-v,r)}{f(u,r)}=\lim_{r\downarrow 0}\frac{\mu(B_2(u-v,r))}{\lambda(B_2(u-v,r))}~\lim_{r\downarrow 0}\frac{\lambda(B_2(u,r))}{\mu(B_2(u,r))}.
	\end{equation*}
	Thus, by the quotient rule for convergent sequences, the hypothesis that $g(u)>0$, and the Lebesgue differentiation theorem, we obtain
	\begin{equation*}
		\lim_{r\downarrow 0}\frac{\lambda(B_2(u,r))}{\mu(B_2(u,r))}=\left(\lim_{r\downarrow 0}\frac{\mu(B_2(u,r))}{\lambda(B_2(u,r))}\right)^{-1}=\left(g(u)\right)^{-1}.
	\end{equation*}
	Applying the Lebesgue differentiation theorem again and using the hypothesis that $u$ is an $E$-weak mode, we thus obtain
	\begin{equation*}
		1\geq \lim_{r\downarrow 0}\frac{f(u-v,r)}{f(u,r)}=\frac{g(u-v)}{g(u)},\quad\forall v\in E.
	\end{equation*}
	The above implies that 
	\begin{equation}
		\label{eq:consequence_of_translation_invariance_1}
		\sup_{z\in u-E}g(z)\leq g(u).
	\end{equation}
	By continuity of $g$ and density of $E$ in $X$, \eqref{eq:consequence_of_translation_invariance_1} implies the stronger statement that $\sup_{z\in X}g(z)\leq g(u)$.
	Translation invariance of Lebesgue measure yields
	\begin{equation*}%\label{eq:consequence_of_translation_invariance_2}
	\sup_{z\in X}f(z,r)\leq \sup_{z\in X} g(z)~\lambda(B_2(u',r)),
	\end{equation*}
	for any $u'\in X$.
	Using the properties of limits, the definition of $f$, the Lebesgue differentiation theorem, and \eqref{eq:consequence_of_translation_invariance_1}, we have
	\begin{align*}
		\lim_{r\downarrow 0}\frac{\sup_{z\in X}f(z,r)}{f(u,r)}&\leq \sup_{z\in X}g(z)\left(\lim_{r\downarrow 0}\frac{f(u,r)}{\lambda (B_2(u,r))}\right)^{-1}
		\\
		&= \sup_{z\in X}g(z)\left(\lim_{r\downarrow 0}\frac{\mu(B_2(u,r))}{\lambda (B_2(u,r))}\right)^{-1}
		\\
		&= \sup_{z\in X}g(z)\left(g(u)\right)^{-1}\leq 1.\qedhere
	\end{align*}
\end{proof}

Proposition~\ref{prop:equivalent_definitions_of_weak_modes_finite_dim} indicates the power of the combination of topological density of $E$ and a continuous density with respect to a nice reference measure like Lebesgue measure.

The next lemma shows that the uniformity condition \eqref{eq:uniformity_condition} implies that every $E$-weak mode is an $E$-strong mode.

\begin{lemma}
      \label{lem:uniform_plus_Eweak_implies_supEweak}
      Let $\emptyset \neq E\subset X$, and suppose $u$ is an $E$-weak mode.
      If $(u,E)$ satisfy the uniformity condition \eqref{eq:uniformity_condition}, then $u$ is an $E$-strong mode.
\end{lemma}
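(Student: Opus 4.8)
The plan is to exploit the uniformity condition to collapse the supremum $\sup_{z\in u-E}f(z,r)$ onto a single translate $f(u-v^\ast,r)$ for all sufficiently small $r$, and then to read off the desired inequality directly from the $E$-weak mode hypothesis applied to the distinguished vector $v^\ast\in E$.

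First I would record that, since $u\in\supp(\mu)$ (as part of the definition of an $E$-weak mode) and $K(u,r)$ is a nonempty open set for every $r>0$, we have $f(u,r)=\mu(K(u,r))>0$, so every quotient below is well defined; moreover $f$ is finite everywhere because $\mu$ is finite on bounded sets and each $K(z,r)=z+rK$ is bounded. Next, let $(v^\ast,r^\ast)\in E\times(0,1)$ be as in \eqref{eq:uniformity_condition}. Then for every $r\in(0,r^\ast)$ the uniformity condition gives the exact identity
\[
	\frac{\sup_{z\in u-E}f(z,r)}{f(u,r)}=\frac{f(u-v^\ast,r)}{f(u,r)}.
\]
Since the limit as $r\downarrow 0$ of a function depends only on its values on an arbitrarily small interval $(0,\varepsilon)$, and since the right-hand side has a limit as $r\downarrow 0$ — namely $\lim_{r\downarrow 0}f(u-v^\ast,r)/f(u,r)$, which exists and is at most $1$ because $u$ is an $E$-weak mode and $v^\ast\in E$, cf.\ \eqref{eq:Eweak_mode} — it follows that the left-hand side also has a limit and
\[
	\lim_{r\downarrow 0}\frac{\sup_{z\in u-E}f(z,r)}{f(u,r)}=\lim_{r\downarrow 0}\frac{f(u-v^\ast,r)}{f(u,r)}\leq 1.
\]
This is precisely \eqref{eq:supE_weak_mode}, so $u$ is an $E$-strong mode.

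I do not expect a genuine obstacle here: the argument is a one-line substitution. The only point meriting a word of care is that the definition of an $E$-strong mode implicitly asserts the existence of the limit in \eqref{eq:supE_weak_mode}, and this existence is supplied by the identity above together with the fact that the $E$-weak mode property guarantees the limit of $f(u-v^\ast,r)/f(u,r)$. Conceptually, the uniformity condition is doing all the work: it tames the supremum over the possibly infinite-dimensional set $u-E$ by concentrating it, for small $r$, at the single point $u-v^\ast$, which is exactly what is needed to interchange the supremum with the small-radius limit. (In the finite-dimensional setting treated in Proposition~\ref{prop:equivalent_definitions_of_weak_modes_finite_dim} no such hypothesis is needed, since continuity of the density makes the supremum well behaved automatically.)
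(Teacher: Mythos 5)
Your proof is correct and follows essentially the same route as the paper's: use the uniformity condition to identify $\sup_{z\in u-E}f(z,r)$ with $f(u-v^\ast,r)$ for all $r<r^\ast$, then invoke the $E$-weak mode inequality at $v=v^\ast$. If anything, your phrasing (an exact identity of ratios for small $r$, hence equality of limits) is slightly cleaner than the paper's subtraction-of-limits formulation, since it makes explicit that the existence of the limit in \eqref{eq:supE_weak_mode} is inherited from that in \eqref{eq:Eweak_mode}.
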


\begin{proof}
	From the elementary properties of limits, we have
	\begin{equation*}
		\lim_{r\downarrow 0}\frac{\sup_{z\in u-E}f(z,r)}{f(u,r)}-\lim_{r\downarrow 0}\frac{f(u-v^\ast,r)}{f(u,r)}=\lim_{r\downarrow 0}\frac{\sup_{z\in u-E}f(z,r)-f(u-v^\ast,r)}{f(u,r)}.
	\end{equation*}
	The hypothesis \eqref{eq:uniformity_condition} implies that the right-hand side is zero, since the ratio inside the limit vanishes for all $r<r^{\ast}$.
	Thus the left-hand side is zero, and the limits agree.
	Therefore, given the assumption that $u$ and $E$ satisfy \eqref{eq:Eweak_mode}, it follows that $u$ and $E$ also satisfy \eqref{eq:supE_weak_mode}.
\end{proof}

The proof of Theorem~\ref{thm:equivalence_Eweak_Estrong} follows.
\begin{proof}[Proof of Theorem~\ref{thm:equivalence_Eweak_Estrong}]
	By the right implication in \eqref{eq:strong_implies_supEweak_implies_Eweak}, every $E$-strong mode $u$ is also an $E$-weak mode, regardless of whether or not the pair $(u,E)$ satisfies the uniformity condition \eqref{eq:uniformity_condition}.
	On the other hand, if $(u,E)$ satisfies the uniformity condition, then Lemma~\ref{lem:uniform_plus_Eweak_implies_supEweak} yields that any $E$-weak mode $u$ is also $E$-strong.
\end{proof}

The next example involving a geometric sequence of crossed squares demonstrates that it is possible for a vector $u\in\supp(\mu)$ to satisfy \eqref{eq:Eweak_mode} but not \eqref{eq:supE_weak_mode}, even when $X$ is a finite-dimensional Banach space and $\mu$ is a nonatomic, finite measure on $X$.
The essential idea of the example is to construct a measure such that the ratios of measures of balls decay sufficiently rapidly for \eqref{eq:Eweak_mode} to hold, but not uniformly rapidly, so that \eqref{eq:supE_weak_mode} does not hold.
In addition, although there exist intervals of $r$-values in which the supremum $\sup_{z\in u-E}f(z,r)$ is attained at some element of $u-E$, none of these intervals extend to the origin.
Hence, there does not exist a $v\in E$ such that the values of $\sup_{z\in u-E}f(z,r)$ are dominated by those of $f(u-v,r)$ for all sufficiently small $r$, and the measure $\mu$ constructed below fails to satisfy the uniformity condition.

\begin{figure}
	\centering
	\includegraphics[width=6.5cm]{./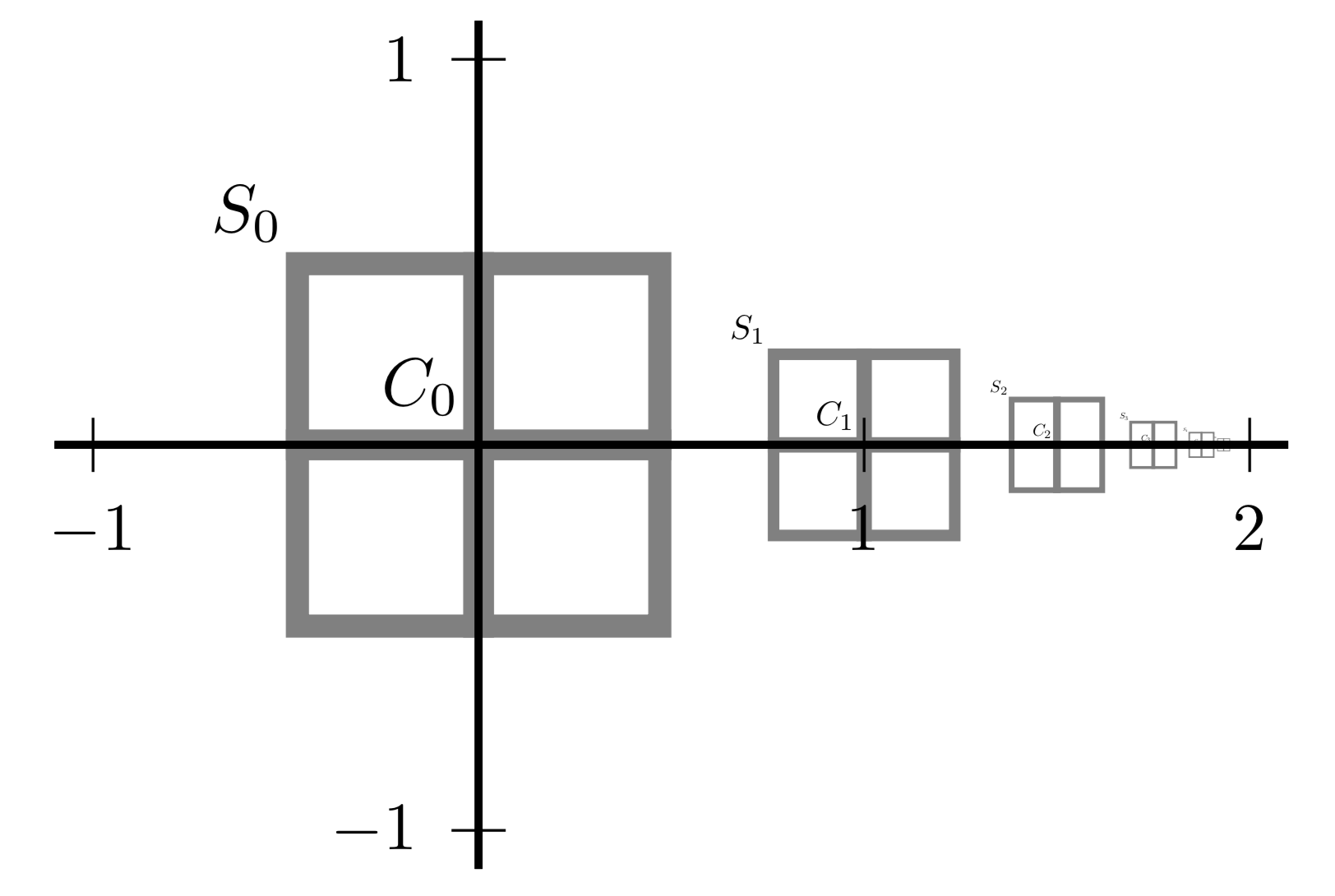}
	\caption{Illustration of the crossed squares $C_{n} \cup S_{n}$ used in the construction of the measure $\mu$ on $\mathbb{R}^{2}$ in Example~\ref{example:weak_does_not_imply_weak_uniform} that has countably infinitely many weak modes but no $\mathbb{Q}^{2}$-strong mode.}
	\label{fig:weak_does_not_imply_weak_uniform}
\end{figure}

\begin{example}[$E$-weak modes that are not $E$-strong]
\label{example:weak_does_not_imply_weak_uniform}
	Let $X=\mathbb{R}^2$, let $E=\mathbb{Q}^2$, let $K(0,1)\defeq B_\infty(0,1)$, and let $u$ be the origin of $X$.
	Fix $\alpha=\tfrac{7}{8}$.
	Define the sequence $(v_n)_{n\in\mathbb{N}_{0}}\subset E$ by
	\begin{equation*}
		v_0=u=(0,0),\quad v_n\defeq \left(\sum^{n-1}_{m=0}2^{-m},0\right).
	\end{equation*}
	For $i = 1, 2$, let $e_i$ denote the $i$\textsuperscript{th} Euclidean basis vector.
	For $n\in\mathbb{N}_{0}$, let 
	\begin{equation*}
		C_{n}^{i}\defeq \left\{v_n+re_i\ \middle\vert\ -\frac{1}{2^{n+1}}<r<\frac{1}{2^{n+1}}\right\}
	\end{equation*}
	be a line segment of length $2^{-n}$ centred at $v_{n}$ that is parallel to the $e_i$-axis.
	Define the cross-shaped set $C_n \defeq C_{n}^{1}\cup C_{n}^{2}$, and let $S_{n} \defeq \partial B_{\infty}\left(v_n,2^{-(n+1)}\right) = \bigcup_{i=1}^{4} S_{n}^{i}$ be the boundary square, where $(S_{n}^{i})_{i=1}^{4}$ are the four edges of the square.
	Thus, as illustrated in Figure~\ref{fig:weak_does_not_imply_weak_uniform}, we have a sequence of crossed squares of the form $C_{n}\cup S_{n}$ centred at $v_{n}$.
	The side lengths of the crossed squares form a geometric sequence, and the sequence of $e_1$-coordinates of the centres give the finite-term approximations of a geometric series converging to $2$.
	Note that, under any $\ell_p$-norm on $X$, the distance from $v_{n}$ to $v_{n - 1}$ is  $2^{-(n-1)}$, and the distance between the corresponding crossed squares is $2^{-(n+1)}$.
	Therefore, $\bigcup_{n\in\mathbb{N}_{0}} (C_{n}\cup S_{n})$ is a disjoint countable union of closed sets that is bounded in $X$.

	Let $\mathcal{H}^1$ denote the one-dimensional Hausdorff measure, normalised so that it assigns to any straight line segment its Euclidean length.
	In particular, $\mathcal{H}^1(S_{n}^{i})=\mathcal{H}^1(C_{n}^{k})$ for any $i=1,\ldots,4$ and $k=1,2$.
	Define the measures $\mu_{n}^{C}$, $\mu_{n}^{S}$, and $\mu_{n}$ by
	\begin{align*}
		\mu_{n}^{C}(B) & \defeq \mathcal{H}^{1}(B \cap C_{n}) = \sum_{i=1}^{2}\mathcal{H}^1 (B\cap C_{n}^{i} ), \\
		\mu_{n}^{S}(B) & \defeq \frac{\alpha}{2} \mathcal{H}^{1}(B \cap S_{n}) = \frac{\alpha}{2} \sum_{i=1}^{4}\mathcal{H}^1 (B\cap S_{n}^{i} ),
		\\
		\mu_{n}(B) & \defeq \mu_{n}^{C}(B) + \mu_{n}^{S}(B)
	\end{align*}
	for any Borel set $B \subseteq \mathbb{R}^2$, so that $\supp(\mu_{n}) = C_{n}\cup S_{n}$.
	Observe that, since $\alpha < 1$,
	\[
		\mu_{n}^{C}(X) = 2^{-(n-1)} > \alpha 2^{-(n-1)} = \mu_{n}^{S}(X) ,
	\]
	and $\mu_{n}(X)=(1+\alpha)2^{-(n-1)}$.
	Let $\mu$ be the measure
	\begin{equation}
		\label{eq:mu_for_which_weak_does_not_imply_weak_uniform}
		\mu(B)\defeq \sum_{n=0}^{\infty}\mu_{n}(B) = \sum_{n = 0}^{\infty} \mathcal{H}^{1} (B \cap C_{n}) + \frac{\alpha}{2} \sum_{n = 0}^{\infty} \mathcal{H}^{1} (B \cap S_{n})
	\end{equation}
	for any Borel set $B \subseteq \mathbb{R}^2$.
	Observe that $\mu$ is a finite measure that is not absolutely continuous with respect to Lebesgue measure.

	We now show that every element in the set $(v_n)_{n\in\mathbb{N}_{0}}$ is an $E$-weak mode, i.e.\ that for every $n\in \mathbb{N}_{0}$ we have
	\begin{equation*}
		\lim_{r\downarrow 0}\frac{\mu(B_\infty(v_{n}-v,r))}{\mu(B_\infty(v_{n},r))}\leq 1,\quad\forall v\in E.
	\end{equation*}
	Note that for every $n\in \mathbb{N}_{0}$, there exists some $R(n)$ such that $\mu(B_\infty(v_{n},r))=4r$ for all $r<R(n)$.

	Let $n\in\mathbb{N}_{0}$ be arbitrary.
	It suffices to consider $v\in E$ such that $v_{n}-v$ belongs to $\bigcup_{n\in\mathbb{N}_{0}}(C_{n}\cup S_{n})$, since otherwise $\mu(B_\infty(v_{n}-v,r))=0$ for all sufficiently small $r$, and the limit above is zero.
	If $v_{n}-v\in Z$, then for sufficiently small $r$ the ratio in the limit is 1.
	This leaves three cases to consider.
	Below, we denote the closure of the set $A$ by $\cl{A}$.

	\textit{Case 1.} Suppose $v_{n}-v\in C_{n}^{i}\setminus (v_{n})_{n\in\mathbb{N}_{0}}$ for some $i\in\{1,2\}$.
	For sufficiently small $r$, $\mu(B_\infty(v_{n}-v,r))=\mu_{n}^{C}(B_\infty(v_{n}-v,r)\cap C_{n}^{i})=2r$, so in this case the limit equals $\tfrac{1}{2}$.

	\textit{Case 2.} Suppose $v_{n}-v\in S_{n}^{j}\setminus \cl{C_{n}}$ for some $j\in\{1,\ldots,4\}$.
	For sufficiently small $r$, $\mu(B_\infty(v_{n}-v,r))=\mu_{n}^{S}(B_\infty(v_{n}-v,r)\cap S_{n}^{j})=2\alpha r$, so in this case the limit equals $\tfrac{\alpha}{2}$.

	\textit{Case 3.} Suppose $v_{n}-v\in S_{n}^{j}\cap\cl{C_{n}}$ for some $j$.
	For sufficiently small $r$, 
	\begin{equation*}
	\mu(B_\infty(v_{n}-v,r))=\mu_{n}^{S}(B_\infty(v_{n}-v,r)\cap S_{n}^{j})+\mu_{n}^{C}(B_\infty(v_{n}-v,r)\cap C_{n})=2\alpha r+r,
	\end{equation*}
	so in this case the limit equals $\tfrac{2\alpha+1}{4}<\tfrac{3}{4}$.
	
	Since in all the preceding cases the limit in question is less than or equal to 1, $v_{n}$ is an $E$-weak mode.
	
	Next, we show that $v_{0}$ is not an $E$-strong mode.
	Fix an arbitrary $n\in\mathbb{N}$.
	Since the $\ell_\infty$-radius of the crossed square $C_{n}\cup S_{n}$ is $2^{-(n+1)}$ and the distance of $C_{n}\cup S_{n}$ to the nearest of the two adjacent crossed squares is $2^{-(n+2)}$, it follows that for arbitrary $\tfrac{1}{2^{n+1}}<r<\tfrac{3}{2^{n+2}}$, we have 
	\begin{equation*}
		\mu(B_\infty(v_n,r))=\mu_{n}(X)=(1+\alpha)2^{-(n-1)}=(1+\alpha)\frac{4}{2^{n+1}},
	\end{equation*}
	and for the same values of $r$ we have
	\begin{equation*}
		\frac{4}{2^{n+1}}<\mu(B_\infty(v_{0},r))=\mu_{0}^{C}(B_\infty(v_{0},r))<\frac{6}{2^{n+1}}.
	\end{equation*}
	Computing the ratio and using that $\alpha=\tfrac{7}{8}$ yields
	\begin{equation*}
		\frac{5}{4}=(1+\alpha)\frac{4}{6}<\frac{ \mu(B_\infty(v_{n},r))}{\mu(B_\infty(v_{0},r))}<1+\alpha.
	\end{equation*}
	Since $n\in\mathbb{N}$ was arbitrary, it follows that there exists a countable sequence of intervals of values of $r$ of the form $\tfrac{1}{2^{n+1}}<r<\tfrac{3}{2^{n+2}}$, such that over each interval the ratio $\sup_{z\in v_{0}-E}\mu(B_\infty(z,r))/\mu(B_\infty(v_{0},r))$ cannot be strictly less than $\tfrac{5}{4}$.
	Thus 
	\begin{equation*}
		\lim_{r\downarrow 0}\frac{\sup_{z\in v_{0}-E} f(z,r)}{f(v_{0},r)}\geq \frac{5}{4}>1,
	\end{equation*}
	and so $v_{0}$ is an $E$-weak mode, but not an $E$-strong mode.

	By replacing $v_{0}$ in the preceding analysis with $v_{m}$ for an arbitrary $m\in\mathbb{N}$ and making appropriate modifications, it follows that the set of $E$-weak modes coincides exactly with the set $\{ v_{n} \}_{n\in\mathbb{N}_{0}}$, and that no element in this set is an $E$-strong mode.
	Thus, $\mu$ has countably infinitely many weak modes, but no $E$-strong mode.
\end{example}

\begin{remark}
      \label{rem:weak_does_not_imply_weak_uniform}
	Example~\ref{example:weak_does_not_imply_weak_uniform} yields two important observations.
	First, a measure may admit a weak mode but not a mode; this follows from \eqref{eq:strong_implies_supEweak_implies_Eweak}.
	Second, the example demonstrates that there exists a finite measure $\mu$ on a vector space $X$ and some dense subset $E$ of $X$ such that some $u\in\supp(\mu)$ is an $E$-weak mode but not an $E$-strong mode. In addition, the measure $\mu$ does not satisfy the uniformity condition.
	Therefore, the uniformity condition \eqref{eq:uniformity_condition} cannot be removed in general.
	This is not surprising, given that \eqref{eq:supE_weak_mode} describes uniform decay of $f(u-v,r)$ values over all $v\in E$, whereas \eqref{eq:Eweak_mode} allows for non-uniform decay.
\end{remark}

In this section, both Proposition~\ref{prop:equivalent_definitions_of_weak_modes_finite_dim} and Example~\ref{example:weak_does_not_imply_weak_uniform} make heavy use of fundamental properties of Lebesgue measure.
However, many inverse problems involve infinite-dimensional vector spaces, for which there exists no analogue of Lebesgue measure.
While Lemma~\ref{lem:uniform_plus_Eweak_implies_supEweak} provides a sufficient condition that holds for any $X$, and while Example~\ref{example:weak_does_not_imply_weak_uniform} shows that this sufficient condition is sharp in full generality, it may be useful to determine alternative sufficient conditions for an $E$-weak mode to be $E$-strong, when the measure $\mu$ and $E$ have sufficient regularity properties, e.g.\ those assumed by Helin and Burger \cite{helin_burger_map_estimates_2015_published}.

Given a $\sigma$-finite measure $\mu$ that satisfies Assumption~\ref{asmp:context}, and  given an arbitrary vector $v\in X$, define the translated measure $\mu_{v}$ by
\begin{equation}
	\label{eq:translated_measure}
	\mu_{v}(A)=\mu(A-v),
\end{equation}
for each Borel set $A \subseteq X$.
If $\mu_{v}$ is absolutely continuous with respect to $\mu$, i.e.\ if $\mu_{v} \ll \mu$, then denote the corresponding equivalence class of Radon--Nikodym derivatives by $[\tfrac{\mathrm{d}\mu_{v}}{\mathrm{d}\mu}]\in L^{1}(\mu)$; $\tfrac{\mathrm{d}\mu_{v}}{\mathrm{d}\mu}$ shall denote a representative of this equivalence class.
Recalling that $\mathcal{N}(x)$ denotes the set of open neighbourhoods of $x \in X$, let $C(U)$ denote the set of functions that are continuous on $U\in\mathcal{N}(x)$.
For any $x\in X$ and $U\in\mathcal{N}(x)$, define the set
\begin{align}
	\label{eq:set_of_good_translation_vectors}
	T(x)\defeq  \left\{v\in X \,\middle|\, \mu_{v} \ll \mu,\ \exists\left(U, \frac{\mathrm{d}\mu_{v}}{\mathrm{d}\mu}\right)\in\mathcal{N}(x)\times \left[\frac{\mathrm{d}\mu_{v}}{\mathrm{d}\mu}\right]\text{ with }\frac{\mathrm{d}\mu_{v}}{\mathrm{d}\mu}\in C(U)\right\}. 
\end{align}
We have the following result:

\begin{lemma}
	\label{lem:quasi_invariant_vectors_implies_limiting_ratio_equals_density}
	Let $\mu$ be a $\sigma$-finite measure on $X$ and $u\in \supp(\mu)$.
	Suppose that $T(u)$ is nonempty, and let $v\in T(u)$.
	Then 
	\begin{equation}
		\label{eq:limiting_ratio_equals_density}
		\lim_{r\downarrow 0}\frac{f(u-v,r)}{f(u,r)} = \frac{\mathrm{d}\mu_{v}}{\mathrm{d}\mu}(u).
	\end{equation}
\end{lemma}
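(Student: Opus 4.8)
The plan is to recognise the ratio $f(u-v,r)/f(u,r)$ as an average of a representative of $\mathrm{d}\mu_{v}/\mathrm{d}\mu$ against $\mu$ over the shrinking sets $K(u,r)$, and then to push the continuity of that representative through the limit. First I would translate: by the definition \eqref{eq:translated_measure} of $\mu_{v}$, and since shifting $K(u,r)=u+rK$ by $-v$ produces $(u-v)+rK=K(u-v,r)$, one has $f(u-v,r)=\mu(K(u-v,r))=\mu(K(u,r)-v)=\mu_{v}(K(u,r))$. Because $v\in T(u)$ we may fix a representative $h\defeq\mathrm{d}\mu_{v}/\mathrm{d}\mu$ of $[\mathrm{d}\mu_{v}/\mathrm{d}\mu]$ and a neighbourhood $U\in\mathcal{N}(u)$ with $h\in C(U)$, so that $\mu_{v}(K(u,r))=\int_{K(u,r)}h\,\mathrm{d}\mu$ and hence
\[
	\frac{f(u-v,r)}{f(u,r)}=\frac{1}{\mu(K(u,r))}\int_{K(u,r)}h\,\mathrm{d}\mu .
\]
Before anything else I would check this quotient is well defined for every $r>0$: $K(u,r)$ is open and contains $u$, so $\mu(K(u,r))>0$ since $u\in\supp(\mu)$, while $K(u,r)$ is bounded, so $\mu(K(u,r))<\infty$ by Assumption~\ref{asmp:context} (and likewise $f(u-v,r)=\mu_{v}(K(u,r))<\infty$).

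Next I would use that, since $K$ is a bounded neighbourhood of the origin, the dilates $\{rK\}_{r>0}$ form a neighbourhood basis of $0$, so that for every $W\in\mathcal{N}(0)$ there is $r_{0}>0$ with $K(u,r)=u+rK\subseteq u+W$ whenever $0<r<r_{0}$; in particular $K(u,r)$ eventually lies inside $U$. Fix $\varepsilon>0$. By continuity of $h$ at $u$, the set $\{z\in U:\abs{h(z)-h(u)}<\varepsilon\}$ is an open neighbourhood of $u$, so the previous remark gives some $r_{0}>0$ with $K(u,r)$ contained in it for all $r\in(0,r_{0})$. For such $r$,
\[
	\abs{\frac{f(u-v,r)}{f(u,r)}-h(u)}=\abs{\frac{1}{\mu(K(u,r))}\int_{K(u,r)}\bigl(h(z)-h(u)\bigr)\,\mu(\rd z)}\leq\varepsilon ,
\]
and letting $\varepsilon\downarrow 0$ yields $\lim_{r\downarrow 0}f(u-v,r)/f(u,r)=h(u)$. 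Finally I would observe that $h(u)$ does not depend on the chosen representative, since any two representatives of $[\mathrm{d}\mu_{v}/\mathrm{d}\mu]$ that are continuous near $u$ differ on an open $\mu$-null set, which cannot contain the support point $u$; hence the right-hand side of \eqref{eq:limiting_ratio_equals_density} is unambiguous.

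The only step that is not pure bookkeeping is this passage from a measure-theoretic limit of ratios to a pointwise value, and it rests on the confluence of three facts that should be flagged explicitly: boundedness of $K$ (so that the dilates $rK$ shrink to $0$), $u\in\supp(\mu)$ (so that the denominators $\mu(K(u,r))$ never vanish), and membership $v\in T(u)$ (which supplies an honestly continuous representative of $\mathrm{d}\mu_{v}/\mathrm{d}\mu$ on a neighbourhood of $u$, rather than one that is merely approximately continuous $\mu$-almost everywhere). Everything else amounts to invoking Assumption~\ref{asmp:context} to keep the relevant integrals finite and the definition of $\mu_{v}$ to relabel $f(u-v,r)$.
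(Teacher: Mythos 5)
Your proof is correct, and at heart it follows the same strategy as the paper's: rewrite $f(u-v,r)=\mu_{v}(K(u,r))=\int_{K(u,r)}\tfrac{\mathrm{d}\mu_{v}}{\mathrm{d}\mu}\,\mathrm{d}\mu$, use $u\in\supp(\mu)$ and finiteness on bounded sets to make the ratio well defined, and let the continuity of the chosen representative at $u$ force the limit to equal its value there. The difference is in execution. The paper sandwiches the ratio between $\inf_{z\in K(u,r)}\tfrac{\mathrm{d}\mu_{v}}{\mathrm{d}\mu}(z)$ and $\sup_{z\in K(u,r)}\tfrac{\mathrm{d}\mu_{v}}{\mathrm{d}\mu}(z)$ and then argues, via a decreasing sequence of radii with $K(u,r_{n})\downarrow\{u\}$, that both bounds converge to $\tfrac{\mathrm{d}\mu_{v}}{\mathrm{d}\mu}(u)$; to do so it must first rule out an infinite supremum, and it does this by invoking the hypothesis that $u$ is an $E$-weak mode --- a hypothesis which does not actually appear in the lemma's statement. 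Your direct $\varepsilon$-estimate of the averaged integrand, $\bigl|\tfrac{1}{\mu(K(u,r))}\int_{K(u,r)}(h(z)-h(u))\,\mu(\mathrm{d}z)\bigr|\le\varepsilon$ once $K(u,r)$ sits inside $\{|h-h(u)|<\varepsilon\}$, makes both the finiteness issue and the case analysis on the infimum evaporate, and it also dispenses with passing to a monotone subsequence of radii. Your closing remarks --- that boundedness of $K$ makes the dilates $rK$ a neighbourhood basis of $0$ (strictly, one should pass to a balanced neighbourhood inside a given $W$ before scaling, a standard TVS fact the paper also uses implicitly), and that the value $h(u)$ is representative-independent because two continuous representatives cannot differ at a support point --- are exactly the right things to flag and are handled correctly. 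In short: same idea, but your version is tighter and repairs a blemish in the paper's argument.
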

 
The result and the idea of its proof is similar to that of \cite[Lemma 2]{helin_burger_map_estimates_2015_published}.
In our case, however, $X$ is not a Banach space, and $\mu$ need not be finite.
Two other key differences are that we do not assume the existence of a representative of $[\tfrac{\mathrm{d}\mu_{v}}{\mathrm{d}\mu}]$ that is continuous on all of $X$, and that we do not assume that the continuous representative attains either its infimum or its supremum on $U$.

\begin{proof}[Proof of Lemma~\ref{lem:quasi_invariant_vectors_implies_limiting_ratio_equals_density}]
	For arbitrary $(x,y,r)\in X\times X\times \mathbb{R}_{>0}$, the notation \eqref{eq:evaluation_map_f} for the evaluation map $f$ and the notation \eqref{eq:translated_measure} for the translated measure $\mu_{x}$ yield
	\begin{equation*}
		f(x-y,r)=\mu(K(x-y,r))=\mu_{y}(K(x,r)).
	\end{equation*}
	Next, notice that if $\mu_{y}\ll \mu$, then for any element $\tfrac{\mathrm{d}\mu_{y}}{\mathrm{d}\mu}$ of the equivalence class $[\tfrac{\mathrm{d}\mu_{y}}{\mathrm{d}\mu}]\in L^{1}(\mu)$, we have
	\begin{equation*}
		\mu(K(x-y,r))=\int_{K(x,r)}\frac{\mathrm{d}\mu_{y}}{\mathrm{d}\mu}(z) \, \mu(\mathrm{d}z)\leq \left(\sup_{z\in K(x,r)}\frac{\mathrm{d}\mu_{y}}{\mathrm{d}\mu}(z)\right)\mu(K(x,r)).
	\end{equation*}
	Dividing both sides by $\mu(K(x,r))=f(x,r)$ and using that $\mu(K(x-y,r))=f(x-y,r)$, we obtain
	\begin{equation}
		\label{eq:ratio_fixed_radius_bounded_by_sup_density}
		\frac{f(x-y,r)}{f(x,r)}\leq \sup_{z\in K(x,r)}\frac{\mathrm{d}\mu_{y}}{\mathrm{d}\mu}\mu(z).
	\end{equation}
	Modifying the argument for a lower bound yields
	\begin{equation}
		\label{eq:sandwich}
		\inf_{z\in K(x,r)} \frac{\mathrm{d}\mu_{y}}{\mathrm{d}\mu}(z)\leq \frac{f(x-y,r)}{f(x,r)}\leq \sup_{z\in K(x,r)}\frac{\mathrm{d}\mu_{y}}{\mathrm{d}\mu}(z).
	\end{equation}
	Note that it is not necessary for either the infimum or the supremum to be attained.
	In addition, the infimum (supremum) may be $-\infty$ ($+\infty$).

	Suppose that $u$ and $v$ satisfy the hypotheses.
	In particular, since $u\in\supp(\mu)$ it follows that $f(u,r) > 0$ for all $r>0$.
	Since $v\in T(u)$, there exists a neighbourhood $U\in\mathcal{N}(u)$ and a representative $\tfrac{\mathrm{d}\mu_{v}}{\mathrm{d}\mu}$ of $[\tfrac{\mathrm{d}\mu_{v}}{\mathrm{d}\mu}]\in L^{1}(\mu)$ such that $\tfrac{\mathrm{d}\mu_{v}}{\mathrm{d}\mu}$ is continuous on $U$.
	For the remainder of this proof, we may assume without loss of generality that the radius parameter $r$ of $K(u,r)$ is small enough that $K(u,r)\subset U$.

	To establish finiteness of the supremum in \eqref{eq:sandwich} for $x=u$ and sufficiently small values of $r$, we use the hypothesis that $u$ is an $E$-weak mode.
	Suppose that the supremum is not finite for any sufficiently small value of $r$.
	Then for arbitrary $C>1$, there exists a sequence $(r_{n})_{n\in\mathbb{N}}$ in the interval $(0,r^{\ast})$ that satisfies $r_{n}\downarrow 0$ and $\sup_{z\in K(u,r_{n})}\tfrac{\mathrm{d}\mu_{v}}{\mathrm{d}\mu}(z)>C$ for all $n\in\mathbb{N}$.
	By continuity of $\tfrac{\mathrm{d}\mu_{v}}{\mathrm{d}\mu}$, there exists some $M\in\mathbb{N}$ such that $\tfrac{\mathrm{d}\mu_{v}}{\mathrm{d}\mu}>\tfrac{C}{2}$ on $K(u,r_{m})$ for all $m\geq M$, and thus
	\begin{equation*}
		\mu(K(u-v,r_{m}))=\int_{K(u,r_{m})}\frac{\mathrm{d}\mu_{v}}{\mathrm{d}\mu}(z) \, \mu(\mathrm{d}z)\geq  \frac{C}{2}\mu(K(u,r_{m})),\quad\forall m\geq M.
	\end{equation*}
	Taking the limit as $m\to\infty$ and letting $C>4$ be arbitrary, we obtain a contradiction with \eqref{eq:Eweak_mode}.
	Thus the supremum is finite for sufficiently small $r$.
  
	For the infimum in \eqref{eq:sandwich}, we consider the two possible cases.
	In the first case, there exists a neighbourhood $U'\in \mathcal{N}(u)$ such that $[\tfrac{\mathrm{d}\mu_{v}}{\mathrm{d}\mu} \vert_{U'\cap U}]=[0]\in L^{1}(\mu)$.
	Since $\tfrac{\mathrm{d}\mu_{v}}{\mathrm{d}\mu}$ is continuous on $U'\cap U$, it follows that $\tfrac{\mathrm{d}\mu_{v}}{\mathrm{d}\mu}$ vanishes uniformly on $U'\cap U$.
	Thus for all $r$ sufficiently small such that $K(u,r)\subset U'\cap U$, both the infimum and supremum in \eqref{eq:sandwich} are zero.
	In the second case, for all $U'\in\mathcal{N}(u)$ it holds that $[\tfrac{\mathrm{d}\mu_{v}}{\mathrm{d}\mu}\vert_{U'}]\neq [0]$.
	Therefore, by taking a decreasing sequence of subsets $(U'_{m})_{m\in\mathbb{N}}\subset\mathcal{N}(u)$ such that $U'_{m}\subset U$ and $U'_{m}\downarrow \{u\}$, it follows from the continuity of $\tfrac{\mathrm{d}\mu_{v}}{\mathrm{d}\mu}$ on $U'_{m}\cap U$ and $\mu$-a.e.\ non-negativity of $\frac{\mathrm{d}\mu_{v}}{\mathrm{d}\mu}$ that $\tfrac{\mathrm{d}\mu_{v}}{\mathrm{d}\mu}$ is strictly positive on $U'_{m}\cap U$.
	Hence $\tfrac{\mathrm{d}\mu_{v}}{\mathrm{d}\mu}(u) > 0$.
	Using the latter observation and continuity, we may further assume that $\tfrac{\mathrm{d}\mu_{v}}{\mathrm{d}\mu}$ is bounded away from zero on $K(u,r)$ for all sufficiently small $r$.
  
	Let $(r_{n})_{n\in\mathbb{N}}$ be an arbitrary decreasing sequence contained in the interval $(0,r^{\ast})$, such that $r_{n}\to 0$.
	By the boundedness of $K$, by passing to a subsequence\footnote{Passage to a subsequence is not necessary if $K$ is star-shaped with respect to the origin.} if necessary, we may assume that $(K(u,r_{n}))_{n\in\mathbb{N}}$ is a strictly decreasing sequence of sets, i.e.\ $K(u,r_{n_2})\subset K(u,r_{n_1})$ whenever $n_{1}< n_{2}$.
	Since $r_{n}\to 0$ and $u\in K(u,r_{n})$ for all $n\in\mathbb{N}$, it follows that $K(u,r_{n})\downarrow \{u\}$.
	Combining these observations with the definitions of the supremum and infimum, it follows that $(\sup_{z\in K(u,r_{n})}\tfrac{\mathrm{d}\mu_{v}}{\mathrm{d}\mu}(z))_{n\in\mathbb{N}}$ and $(\inf_{z\in K(u,r_{n})}\tfrac{\mathrm{d}\mu_{v}}{\mathrm{d}\mu}(z))_{n\in\mathbb{N}}$ are strictly decreasing and increasing sequences respectively.
	In addition, the first (resp.\ second) sequence is bounded from below (resp.\ above) by $\tfrac{\mathrm{d}\mu_{v}}{\mathrm{d}\mu}(u)$.
	By continuity of $\tfrac{\mathrm{d}\mu_{v}}{\mathrm{d}\mu}$ on $K(u,r_{n})$ for every $n\in\mathbb{N}$, it follows that both sequences converge to $\tfrac{\mathrm{d}\mu_{v}}{\mathrm{d}\mu}(u)$.
	By \eqref{eq:sandwich}, the desired conclusion follows.
\end{proof}

\begin{corollary}
	\label{cor:quasi_invariant_vectors_implies_limiting_ratio_equals_density}
	Let $\mu$ be a $\sigma$-finite measure on $X$ and $u\in\supp(\mu)$.
	Suppose that $T(u)$ is nonempty, and let $E\subset T(u)$ be nonempty.
	If $u$ is an $E$-weak mode, then $\sup_{v\in E}\tfrac{\mathrm{d}\mu_{v}}{\mathrm{d}\mu}(u)\leq 1$.
\end{corollary}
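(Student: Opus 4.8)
The plan is to apply Lemma~\ref{lem:quasi_invariant_vectors_implies_limiting_ratio_equals_density} pointwise in $v$ and then take a supremum; the corollary is essentially an immediate consequence. Fix an arbitrary $v\in E$. Since $E\subseteq T(u)$ we have $v\in T(u)$, so (together with the standing hypotheses $\mu$ $\sigma$-finite, $u\in\supp(\mu)$, and $u$ an $E$-weak mode, which is what makes the supremum in the sandwich estimate \eqref{eq:sandwich} finite) Lemma~\ref{lem:quasi_invariant_vectors_implies_limiting_ratio_equals_density} applies and gives $\lim_{r\downarrow 0} f(u-v,r)/f(u,r) = \tfrac{\mathrm{d}\mu_{v}}{\mathrm{d}\mu}(u)$ via \eqref{eq:limiting_ratio_equals_density}. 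On the other hand, the defining property \eqref{eq:Eweak_mode} of an $E$-weak mode, specialised to this particular $v\in E$, says $\lim_{r\downarrow 0} f(u-v,r)/f(u,r)\leq 1$. Comparing the two displays yields $\tfrac{\mathrm{d}\mu_{v}}{\mathrm{d}\mu}(u)\leq 1$. As $v\in E$ was arbitrary, passing to the supremum over $v\in E$ gives $\sup_{v\in E}\tfrac{\mathrm{d}\mu_{v}}{\mathrm{d}\mu}(u)\leq 1$, as claimed.

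The only point that warrants explicit attention is that the pointwise value $\tfrac{\mathrm{d}\mu_{v}}{\mathrm{d}\mu}(u)$ appearing in the statement must be well defined, i.e.\ independent of the choice of continuous local representative; this is implicitly needed both in Lemma~\ref{lem:quasi_invariant_vectors_implies_limiting_ratio_equals_density} and here. This is exactly what membership $v\in T(u)$ secures: by \eqref{eq:set_of_good_translation_vectors} there is a representative of $[\tfrac{\mathrm{d}\mu_{v}}{\mathrm{d}\mu}]$ that is continuous on some $U\in\mathcal N(u)$, and if $g_1,g_2$ are two such representatives, continuous on neighbourhoods $U_1,U_2$ of $u$ and agreeing $\mu$-a.e.\ on $U_1\cap U_2$, then $g_1(u)=g_2(u)$: otherwise $g_1-g_2$ would be continuous and nonzero at $u$, hence nonzero on an open neighbourhood $W\subseteq U_1\cap U_2$ of $u$, and since $u\in\supp(\mu)$ forces $\mu(W)>0$ this would contradict $\mu$-a.e.\ agreement. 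Thus the notation is unambiguous and the argument above is rigorous. I do not anticipate any genuine obstacle in this proof, since all of the substantive work has already been carried out in Lemma~\ref{lem:quasi_invariant_vectors_implies_limiting_ratio_equals_density}.
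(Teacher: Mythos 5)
Your proof is correct and follows exactly the same route as the paper: apply Lemma~\ref{lem:quasi_invariant_vectors_implies_limiting_ratio_equals_density} for each $v\in E$, combine with the defining inequality \eqref{eq:Eweak_mode} of an $E$-weak mode, and take the supremum over $v$. The additional remark on the well-definedness of $\tfrac{\mathrm{d}\mu_{v}}{\mathrm{d}\mu}(u)$ is a sensible (if not strictly required) clarification that the paper leaves implicit.
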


\begin{proof}
 	By Definition~\ref{def:Eweak_mode} of an $E$-weak mode and by Lemma~\ref{lem:quasi_invariant_vectors_implies_limiting_ratio_equals_density}, $\tfrac{\mathrm{d}\mu_{v}}{\mathrm{d}\mu}(u)\leq 1$ for all $v\in E$.
 	By definition of the supremum, the conclusion follows.
\end{proof}

The following result establishes an important sufficient condition for an $E$-weak mode to be $E$-strong:
  
\begin{proposition}
    \label{prop:necessary_condition_Eweak_implies_E_strong}
   Let $\mu$ be a $\sigma$-finite measure on $X$ and $u\in \supp(\mu)$. Suppose that $T(u)$ is nonempty, and let $E\subset T(u)$ be nonempty. Suppose $u$ is an $E$-weak mode. If $\sup_{v\in E}\tfrac{\mathrm{d}\mu_{v}}{\mathrm{d}\mu}(u)$ is continuous at $u$, then $u$ is an $E$-strong mode.
\end{proposition}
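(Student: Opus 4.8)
The plan is to bound $\sup_{z \in u-E} f(z,r)$ from above by a supremum of continuous representatives of Radon--Nikodym derivatives over the shrinking set $K(u,r)$, and then to use the continuity hypothesis to control that supremum as $r \downarrow 0$.

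First I would record two consequences of the hypotheses. Since $u$ is an $E$-weak mode and $E \subset T(u)$, Corollary~\ref{cor:quasi_invariant_vectors_implies_limiting_ratio_equals_density} gives $\sup_{v\in E}\tfrac{\mathrm{d}\mu_v}{\mathrm{d}\mu}(u) \leq 1$; here the value $\tfrac{\mathrm{d}\mu_v}{\mathrm{d}\mu}(u)$ does not depend on the choice of continuous representative, because two continuous representatives of $[\tfrac{\mathrm{d}\mu_v}{\mathrm{d}\mu}]$ agree $\mu$-a.e.\ on a neighbourhood of $u$ and, since $u \in \supp(\mu)$, must therefore agree at $u$. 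Next, I would make the continuity hypothesis operational: it provides a neighbourhood $W \in \mathcal{N}(u)$ and, for each $v \in E$, a representative $\tfrac{\mathrm{d}\mu_v}{\mathrm{d}\mu} \in C(W)$ such that the function $h \colon W \to [0,+\infty]$, $h(z) \defeq \sup_{v\in E}\tfrac{\mathrm{d}\mu_v}{\mathrm{d}\mu}(z)$, is continuous at $u$. Combining this with $h(u) \leq 1$: for every $\varepsilon > 0$ there is $W_\varepsilon \in \mathcal{N}(u)$ with $W_\varepsilon \subseteq W$ and $h(z) \leq 1+\varepsilon$ for all $z \in W_\varepsilon$.

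For the main estimate, fix $v \in E$ and $r>0$ small enough that $K(u,r) \subseteq W$. The identity $f(u-v,r) = \mu(K(u-v,r)) = \mu_v(K(u,r)) = \int_{K(u,r)} \tfrac{\mathrm{d}\mu_v}{\mathrm{d}\mu}\,\mathrm{d}\mu$, established in the proof of Lemma~\ref{lem:quasi_invariant_vectors_implies_limiting_ratio_equals_density} and valid for any representative, together with $f(u,r) = \mu(K(u,r)) > 0$ (as $u \in \supp(\mu)$), gives the upper estimate from \eqref{eq:sandwich}: $f(u-v,r)/f(u,r) \leq \sup_{z\in K(u,r)}\tfrac{\mathrm{d}\mu_v}{\mathrm{d}\mu}(z)$. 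Since $u - E = \{u - v : v\in E\}$ and $f(u,r)$ does not depend on $v$, taking the supremum over $v\in E$ and then exchanging the two suprema yields
\[
	\frac{\sup_{z\in u-E} f(z,r)}{f(u,r)} = \sup_{v\in E}\frac{f(u-v,r)}{f(u,r)} \leq \sup_{v\in E}\ \sup_{z\in K(u,r)}\frac{\mathrm{d}\mu_v}{\mathrm{d}\mu}(z) = \sup_{z\in K(u,r)} h(z).
\]

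To finish, I would let $r\downarrow 0$. Because $K$ is bounded, $rK$ is absorbed into every neighbourhood of the origin, so $K(u,r) = u + rK \subseteq W_\varepsilon$ for all sufficiently small $r$; for such $r$ the right-hand side above is at most $\sup_{z\in W_\varepsilon} h(z) \leq 1+\varepsilon$. Hence $\limsup_{r\downarrow 0} \sup_{z\in u-E}f(z,r)/f(u,r) \leq 1+\varepsilon$ for every $\varepsilon > 0$, so \eqref{eq:supE_weak_mode} holds and $u$ is an $E$-strong mode. The step that needs the most care is the second one: extracting from the continuity hypothesis a single neighbourhood $W$ that carries continuous representatives for \emph{all} $v\in E$ simultaneously, and observing that $h$ is automatically finite near $u$ (from $h(u)\leq 1$ and continuity at $u$). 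Once that is in place, the change of variables, the exchange of suprema, and the limiting argument are routine.
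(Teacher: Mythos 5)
Your proposal is correct and follows essentially the same route as the paper's proof: bound $f(u-v,r)/f(u,r)$ by $\sup_{z\in K(u,r)}\tfrac{\mathrm{d}\mu_{v}}{\mathrm{d}\mu}(z)$ via \eqref{eq:ratio_fixed_radius_bounded_by_sup_density}, exchange the two suprema, and use continuity of $\sup_{v\in E}\tfrac{\mathrm{d}\mu_{v}}{\mathrm{d}\mu}$ at $u$ together with Corollary~\ref{cor:quasi_invariant_vectors_implies_limiting_ratio_equals_density} to conclude that the limit is at most $1$. Your explicit $\varepsilon$--neighbourhood argument is just a direct rendering of the paper's step that equality holds in \eqref{eq:inequality_for_equivalence_of_Eweak_Estrong} under the continuity hypothesis, so the two proofs coincide in substance.
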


\begin{proof}
  For arbitrary $r>0$, $v\in E\subset T(u)$ and a fixed representative $\tfrac{\mathrm{d}\mu_{v}}{\mathrm{d}\mu}$, it follows from \eqref{eq:ratio_fixed_radius_bounded_by_sup_density} and the fact that suprema commute that
  \begin{equation*}
   \frac{\sup_{v\in E}f(u-v,r)}{f(u,r)}\leq \sup_{v\in E}\sup_{z\in K(u,r)}\frac{\mathrm{d}\mu_{v}}{\mathrm{d}\mu}(z)=\sup_{z\in K(u,r)}\sup_{v\in E}\frac{\mathrm{d}\mu_{v}}{\mathrm{d}\mu}(z).
  \end{equation*}
  Recall that $E\subset T(u)$ and that $\tfrac{\mathrm{d}\mu_{v}}{\mathrm{d}\mu}$ is continuous at $u$, for all $v\in T(u)$. Since the supremum of a collection of functions that are lower semicontinuous at a point is itself lower semicontinuous at the point, it follows that $\sup_{v\in E}\tfrac{\mathrm{d}\mu_{v}}{\mathrm{d}\mu}$ is lower semicontinuous at $u$. Thus, we have
  \begin{equation}
	\label{eq:inequality_for_equivalence_of_Eweak_Estrong}
	\sup_{v\in E}\frac{\mathrm{d}\mu_{v}}{\mathrm{d}\mu}(u)\leq \lim_{r\downarrow 0}\sup_{z\in K(u,r)}\sup_{v\in E}\frac{\mathrm{d}\mu_{v}}{\mathrm{d}\mu}(z),
  \end{equation}
  with equality if and only if $\sup_{v\in E}\tfrac{\mathrm{d}\mu_{v}}{\mathrm{d}\mu}$ is continuous at $u$. The conclusion then follows by the hypotheses and Corollary~\ref{cor:quasi_invariant_vectors_implies_limiting_ratio_equals_density}.
\end{proof}
  
In order to show that an $E$-weak mode is $E$-strong, it suffices to show the continuity of $\sup_{v\in E}\tfrac{\mathrm{d}\mu_{v}}{\mathrm{d}\mu}$ at $u$.
Using the fact that every lower semicontinuous function can be obtained as the limit of an increasing sequence of continuous functions, let $(v_{n})_{n\in\mathbb{N}}$ be a sequence in $E$.
Define a sequence of functions $(g_{m})_{m\in\mathbb{N}}$ by
\begin{equation*}
   g_{m}(x)\defeq \max\left\{\frac{\mathrm{d}\mu_{v_{n}}}{\mathrm{d}\mu}(x) \,\middle|\, n\in\mathbb{N},\ n\leq m\right\},\quad\forall x\in X.
\end{equation*}
Note that for any $m\in\mathbb{N}$, there is an open neighbourhood $U_{m}\in\mathcal{N}(u)$ such that $g_{m}$ is continuous on $U_{m}$; simply take $U_{m}\defeq \bigcap_{n\leq m}U_{v_{n}}$, where $\tfrac{\mathrm{d}\mu_{v_{n}}}{\mathrm{d}\mu}$ is continuous on $U_{v_{n}}\in\mathcal{N}(u)$.
Since we will take the limit as $m\to\infty$, we will need that $\bigcap_{n\in\mathbb{N}}U_{v_{n}} \in\mathcal{N}(u)$, i.e.\ there exists some $W\in\mathcal{N}(u)$ such that for all $n\in\mathbb{N}$, $\tfrac{\mathrm{d}\mu_{v_{n}}}{\mathrm{d}\mu}$ is continuous on $W$.
By choosing the sequence $(v_{n})_{n\in\mathbb{N}}\subset E\subset T(u)$ appropriately, we may assume without loss of generality that $(g_{m})_{m\in\mathbb{N}}$ converges to $\sup_{v\in E}\tfrac{\mathrm{d}\mu_{v}}{\mathrm{d}\mu}$ on $W$.
  
In order for $\sup_{v\in E}\tfrac{\mathrm{d}\mu_{v}}{\mathrm{d}\mu}$ to be continuous on $W$, we need that $(g_{m})_{m\in\mathbb{N}}$ converges uniformly on $W$.
The standard result for guaranteeing uniform convergence is the Arzel\`{a}--Ascoli theorem.
In order to apply the latter, we need that the $(g_{m})_{m\in\mathbb{N}}$ form an equicontinuous and pointwise bounded sequence on a compact Hausdorff space.
In other words, there must exist a \emph{compact} neighbourhood $W$ of $u$ such that $\tfrac{\mathrm{d}\mu_{v_{n}}}{\mathrm{d}\mu}$ is continuous on $W$ for all $n\in\mathbb{N}$.
Since $X$ is a Hausdorff topological vector space by Assumption~\ref{asmp:context}, the existence of a point $u$ and a compact neighbourhood $W\in\mathcal{N}(u)$ imply that $X$ is in fact locally compact.
However, it is known that every locally compact Hausdorff topological vector space over $\mathbb{R}$ or $\mathbb{C}$ is finite-dimensional.
Thus, for infinite-dimensional $X$, the approach described above fails in general.
  
Another approach is to avoid using the commutativity of suprema, and instead to consider the special case where one can switch the order in which the $r$-limit and supremum over $v\in E$ are taken. In this case,
\begin{align*}
   \lim_{r\downarrow 0}\frac{\sup_{v\in E}f(u-v,r)}{f(u,r)}&\leq \lim_{r\downarrow 0} \sup_{v\in E}\sup_{z\in K(u,r)}\frac{\mathrm{d}\mu_{v}}{\mathrm{d}\mu}(z)
   \\
   &= \sup_{v\in E}\lim_{r\downarrow 0}\sup_{z\in K(u,r)}\frac{\mathrm{d}\mu_{v}}{\mathrm{d}\mu}(z)=\sup_{v\in E}\frac{\mathrm{d}\mu_{v}}{\mathrm{d}\mu}(u),
\end{align*}
where the last equation follows from continuity of $\tfrac{\mathrm{d}\mu_{v}}{\mathrm{d}\mu}$ for all $v\in E$.
Note that this case allows for the neighbourhoods of continuity of $\tfrac{\mathrm{d}\mu_{v}}{\mathrm{d}\mu}$ to differ, which is advantageous.
A sufficient condition for the commutativity of the $r$-limit and supremum over $v\in E$ is that, for some $U^{\ast}\in\mathcal{N}(u)$ and $v^{\ast}\in E$,
\begin{equation}
	\label{eq:uniformity_condition_density_version}
	\frac{\mathrm{d}\mu_{v^{\ast}}}{\mathrm{d}\mu}(z)= \sup_{v\in E}\frac{\mathrm{d}\mu_{v}}{\mathrm{d}\mu}(z),\quad\forall z\in U^{\ast}.
\end{equation}
The lemma below describes a special case in which \eqref{eq:uniformity_condition_density_version} is satisfied, without requiring continuity of $\tfrac{\mathrm{d}\mu_{v}}{\mathrm{d}\mu}$ for any $v$. It also shows that \eqref{eq:uniformity_condition_density_version} implies the uniformity condition \eqref{eq:uniformity_condition}.
\begin{lemma}
   \label{lem:sup_RN_derivatives_continuous}
   Let $\mu$ be a $\sigma$-finite measure on $X$ and $u\in\supp(\mu)$.
   Let $E$ contain the origin, and suppose that for every $v\in E$, it holds that $\mu_{v}\ll \mu$.
   If there exists some $U^{\ast}\in\mathcal{N}(u)$ such that $\tfrac{\mathrm{d}\mu_{v}}{\mathrm{d}\mu}\leq 1$ on $U^{\ast}$ for all $v\in E$, then $\sup_{v\in E}\tfrac{\mathrm{d}\mu_{v}}{\mathrm{d}\mu}(u)$ is continuous at $u$.
   In addition, \eqref{eq:uniformity_condition} holds.
\end{lemma}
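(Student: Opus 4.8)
The plan is to observe that, because $0 \in E$ and the Radon--Nikodym derivatives of the translates are uniformly bounded by $1$ near $u$, the function $\sup_{v \in E} \tfrac{\mathrm{d}\mu_v}{\mathrm{d}\mu}$ is in fact \emph{identically equal to $1$} on $U^{\ast}$; both conclusions will then follow almost immediately, with $v^{\ast} = 0$ serving as the distinguished vector in \eqref{eq:uniformity_condition}.

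First I would fix convenient representatives. Since $0 \in E$ and $\mu_0 = \mu$ by \eqref{eq:translated_measure}, one may take the representative $\tfrac{\mathrm{d}\mu_0}{\mathrm{d}\mu}$ to be the function identically equal to $1$; and, replacing each $\tfrac{\mathrm{d}\mu_v}{\mathrm{d}\mu}$ by its pointwise minimum with $1$ if necessary, I may assume $\tfrac{\mathrm{d}\mu_v}{\mathrm{d}\mu} \leq 1$ holds at every point of $U^{\ast}$ for all $v \in E$. For any $z \in U^{\ast}$ this gives both the lower bound $\sup_{v \in E} \tfrac{\mathrm{d}\mu_v}{\mathrm{d}\mu}(z) \geq \tfrac{\mathrm{d}\mu_0}{\mathrm{d}\mu}(z) = 1$ and the matching upper bound, so $\sup_{v \in E} \tfrac{\mathrm{d}\mu_v}{\mathrm{d}\mu} \equiv 1$ on the neighbourhood $U^{\ast}$ of $u$. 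A constant function is continuous, so $\sup_{v \in E} \tfrac{\mathrm{d}\mu_v}{\mathrm{d}\mu}$ is continuous at $u$; this argument also sidesteps any measurability subtlety that an uncountable supremum of $\mu$-a.e.-defined functions might raise, since the supremum is pinned to a constant.

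Next I would verify \eqref{eq:uniformity_condition} with $v^{\ast} \defeq 0 \in E$, so that $u - v^{\ast} = u$ and $f(u - v^{\ast}, r) = f(u, r)$. Because $U^{\ast} - u$ is a neighbourhood of the origin and $K$ is bounded, there is $\delta > 0$ with $rK \subseteq U^{\ast} - u$, equivalently $K(u, r) \subseteq U^{\ast}$, for all $r \in (0, \delta]$; set $r^{\ast} \defeq \min\{\delta, \tfrac{1}{2}\} \in (0, 1)$. For $r \in (0, r^{\ast})$ and arbitrary $v \in E$, the identity $f(u - v, r) = \mu(K(u - v, r)) = \mu_v(K(u, r))$ used in the proof of Lemma~\ref{lem:quasi_invariant_vectors_implies_limiting_ratio_equals_density}, together with $\tfrac{\mathrm{d}\mu_v}{\mathrm{d}\mu} \leq 1$ on $K(u, r) \subseteq U^{\ast}$, yields $f(u - v, r) = \int_{K(u, r)} \tfrac{\mathrm{d}\mu_v}{\mathrm{d}\mu} \, \mathrm{d}\mu \leq \mu(K(u, r)) = f(u, r) = f(u - v^{\ast}, r)$. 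Taking the supremum over $v \in E$ and using $v^{\ast} \in E$ gives $\sup_{z \in u - E} f(z, r) = f(u - v^{\ast}, r)$, which is precisely \eqref{eq:uniformity_condition}.

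I do not anticipate a real obstacle. The only points requiring a little care are the choice of the representative $\tfrac{\mathrm{d}\mu_0}{\mathrm{d}\mu} \equiv 1$, which is what makes the uncountable supremum collapse to a constant on $U^{\ast}$, and invoking boundedness of $K$ in the precise form that the dilates $rK$ are eventually swallowed by the prescribed neighbourhood $U^{\ast} - u$ of the origin.
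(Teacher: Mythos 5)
Your proof is correct and follows essentially the same route as the paper's: take $v^{\ast}=0$ with $\tfrac{\mathrm{d}\mu_{0}}{\mathrm{d}\mu}\equiv 1$, observe that the hypothesis pins $\sup_{v\in E}\tfrac{\mathrm{d}\mu_{v}}{\mathrm{d}\mu}$ to the constant $1$ on $U^{\ast}$ (hence continuity), and then integrate $\tfrac{\mathrm{d}\mu_{v}}{\mathrm{d}\mu}\leq 1$ over $K(u,r)\subseteq U^{\ast}$ to get $f(u-v,r)\leq f(u,r)=f(u-v^{\ast},r)$ for all small $r$. Your extra care about the choice of representative and about the dilates $rK$ being absorbed into $U^{\ast}-u$ only makes explicit what the paper leaves implicit.
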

\begin{proof}
	If $v^{\ast}$ is the origin, then $\tfrac{\mathrm{d}\mu_{v^{\ast}}}{\mathrm{d}\mu}$ is constant and equal to 1 on all of $X$.
	By the hypotheses, we have that $\sup_{v\in E}\tfrac{\mathrm{d}\mu_{v}}{\mathrm{d}\mu}$ is constant, hence continuous, on $U^{\ast}$.

	For the second statement, let $r^{\ast}>0$ be sufficiently small so that $K(u,r)\subset U^{\ast}$ for all $r<r^{\ast}$. Since 
	\begin{equation*}
		\frac{\mathrm{d}\mu_{v}}{\mathrm{d}\mu}(z)\leq\sup_{v\in E}\frac{\mathrm{d}\mu_{v}}{\mathrm{d}\mu}(z)=\frac{\mathrm{d}\mu_{v^{\ast}}}{\mathrm{d}\mu}(z)=1,\quad\forall z\in U^{\ast},
	\end{equation*}
	it follows that for all $v \in E$,
	\begin{equation*}
		f(u-v,r)=\int_{K(u,r)}\frac{\mathrm{d}\mu_{v}}{\mathrm{d}\mu}(z) \, \mu(\mathrm{d}z) \leq \mu(K(u,r))=f(u,r),\quad\forall r<r^{\ast}.
	\end{equation*}
	Since equality is attained for $v=v^{\ast}$ being the origin, it follows that \eqref{eq:uniformity_condition} holds.
\end{proof}
   
In this section, we saw that when $X$ is an infinite-dimensional Hausdorff topological vector space, the property that $\mu$ and $E$ are such that $\mu$ has absolutely continuous translates $\mu_{v}$ for all $v\in E$ does not suffice for every $E$-weak mode to be $E$-strong. The significance of Lemma~\ref{lem:sup_RN_derivatives_continuous} is to provide an additional condition on the Radon--Nikodym derivatives such that the desired equivalence holds, and to emphasise the importance of the uniformity condition \eqref{eq:uniformity_condition}. Indeed, the key observations of this section are that the uniformity condition is a sufficient condition, and that in general it cannot be weakened; see Example~\ref{example:weak_does_not_imply_weak_uniform} and Remark~\ref{rem:weak_does_not_imply_weak_uniform}.
    
\section{Discussion}
\label{sec:discussion}

In this section, $\mathbb{K}_{\times} \defeq \mathbb{K} \setminus \{0\}$, where $\mathbb{K} \defeq \mathbb{R}$ or $\mathbb{C}$.

\subsection{Arbitrary non-zero scaling factors}
\label{ssec:arbitrary_nonzero_scaling_factors}

As noted in Section~\ref{sec:introduction}, the fact that we restrict the scaling factor $r$ in \eqref{eq:evaluation_map_f} to $\mathbb{R}_{>0}$ even when the base field $\mathbb{K}$ of $X$ is $\mathbb{C}$ implies that the preceding results apply only to the real restriction of $X$.
However, a careful examination of the proofs indicates that the restriction to strictly positive $r\in\mathbb{R}$ is not necessary, even when $X$ is a real topological vector space.
This is because the set $K$ in \eqref{eq:evaluation_map_f} is a bounded neighbourhood of the origin, by Assumption~\ref{asmp:context}.
In particular, $\rho K$ is a bounded neighbourhood of the origin for any $\rho\in\mathbb{K}_{\times}$, for both $\mathbb{K}=\mathbb{R}$ and $\mathbb{K} = \mathbb{C}$.
We therefore define, analogously to \eqref{eq:evaluation_map_f},
\begin{equation}
	\label{eq:evaluation_map_f_arbitrary_nonzero_scaling}
	J(x, \rho)\defeq x+\rho K,\quad \phi(x, \rho)\defeq\mu(J(x, \rho)) \geq 0,
\end{equation}
for $J\in\mathcal{N}(0)$ and $(x, \rho) \in X \times \mathbb{K}_{\times}$.
Let $\abs{\rho}$ denote the complex modulus of $\rho \in \mathbb{C}$.
Below, we adapt the definitions from Section~\ref{sec:main_definitions_results} to this setting.

\begin{definition}
	\label{def:mode_weak_mode_generalisation}
	A \emph{strong mode} (or simply \emph{mode}) of $\mu$ is any $u \in X$ such that 
	\begin{equation}
		\label{eq:mode_generalisation}
		\lim_{\abs{\rho} \to 0} \frac{\sup_{z \in X} \phi(z, \rho)}{\phi(u, \rho)} = 1.
	\end{equation}
	An \emph{$E$-strong mode} of $\mu$ is a point $u\in\supp(\mu)$ such that 
	\begin{equation}
		\label{eq:supE_weak_mode_generalisation}
		\lim_{\abs{\rho}\to 0}\frac{\sup_{z\in u-E}\phi(z,\rho)}{\phi(u,\rho)}\leq 1.
	\end{equation}
	An \emph{$E$-weak mode} (or simply \emph{weak mode}) of $\mu$ is a point $u \in\text{supp}(\mu)$ such that 
	\begin{equation}
		\label{eq:Eweak_mode_generalisation}
		\lim_{\abs{\rho} \to 0} \frac{\sup_{z \in u - E} \phi(z, \rho)}{\phi(u, \rho)} \leq 1.
	\end{equation}
\end{definition}
From the definitions, we obtain the same chain of implications \eqref{eq:strong_implies_supEweak_implies_Eweak} when we allow $\rho\in\mathbb{K}_{\times}$ as when we restricted $\rho$ to $\mathbb{R}_{>0}$.

The following result, analogously to Corollary~\ref{cor:main_result}, answers Helin and Burger's question for vector spaces over $\mathbb{K}$:

\begin{theorem}
	\label{thm:main_result_general_multiplicative_field_K}
	Under Assumption~\ref{asmp:context}, if $E$ is topologically dense in $X$, $u\in \supp(\mu)$, and the pair $(u,E)$ satisfies
	\begin{align*}
		\exists (v^{\ast},r^{\ast})\in E\times (0,1) &\text{ such that, for all $\rho \in \left\{z\in\mathbb{C}~ \middle\vert~ 0<\abs{z}<r^{\ast}\right\}$,} \\		\notag
		\phi(u-v^\ast,\rho) &=\sup_{z\in u-E}\phi(z,\rho),
	\end{align*}
	then $u$ is an $E$-weak mode if and only if $u$ is a strong mode.
\end{theorem}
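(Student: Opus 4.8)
The plan is to observe that every argument in Sections~\ref{sec:E_strong_equiv_strong} and~\ref{sec:E_weak_equiv_E_strong} uses the set $K$ only through two facts: that $x+\lambda K$ is a bounded open neighbourhood of $x$ for each admissible scalar $\lambda$, and that the maps $z\mapsto z+\lambda K$ (for $\lambda\in\mathbb{K}_{\times}$) and $z\mapsto z-v$ (for $v\in X$) are homeomorphisms of $X$, so that openness, boundedness, and topological density all transport through them. Since $\rho K$ is a bounded open neighbourhood of the origin for every $\rho\in\mathbb{K}_{\times}$, whether $\mathbb{K}=\mathbb{R}$ or $\mathbb{K}=\mathbb{C}$, one can transcribe those proofs verbatim, replacing the evaluation map $f$ of \eqref{eq:evaluation_map_f} by the map $\phi$ of \eqref{eq:evaluation_map_f_arbitrary_nonzero_scaling} and the limit $r\downarrow 0$ by $\abs{\rho}\to 0$. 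First I would re-prove Lemma~\ref{lem:lower_semicontinuity_of_f} in the form: for each fixed $\rho\in\mathbb{K}_{\times}$, the map $\phi(\quark,\rho)\colon X\to\mathbb{R}_{\geq 0}$ is lower semicontinuous. The proof is the same as before --- given $x_{n}\to x$ and $y\in X$, one verifies $\mathbb{I}_{J(x,\rho)}(y)\leq\liminf_{n}\mathbb{I}_{J(x_{n},\rho)}(y)$ by splitting into the cases $y\in J(x,\rho)$, $y\notin\cl{J(x,\rho)}$, and $y\in\partial J(x,\rho)$, using openness of $J(x,\rho)=x+\rho K$ and absorption of small perturbations into a bounded neighbourhood of the origin --- and then one applies Fatou's lemma.

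With fibrewise lower semicontinuity available, I would re-prove Proposition~\ref{prop:E_topologically_dense_implies_suprema_of_f_on_r_fibres_coincide} in the form $\sup_{z\in X}\phi(z,\rho)=\sup_{z\in x-E}\phi(z,\rho)$ for all $(x,\rho)\in X\times\mathbb{K}_{\times}$; the proof is word-for-word the same, since lower semicontinuity makes each superlevel set $\{z\in X\mid\phi(z,\rho)>M\}$ open and density of $x-E$ in $X$ lets one approximate the supremum over $X$ by that over $x-E$. Passing to the limit $\abs{\rho}\to 0$ then gives the analogue of the coincident limiting ratios condition, and hence --- exactly as in Lemma~\ref{lem:equivalent_definitions_of_weak_modes} and the proof of Theorem~\ref{thm:equivalence_Estrong_strong} --- that $u$ is an $E$-strong mode in the sense of \eqref{eq:supE_weak_mode_generalisation} if and only if $u$ is a strong mode in the sense of \eqref{eq:mode_generalisation}. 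Separately, I would re-prove Lemma~\ref{lem:uniform_plus_Eweak_implies_supEweak}: from the hypothesis that $\phi(u-v^{\ast},\rho)=\sup_{z\in u-E}\phi(z,\rho)$ for all $\rho$ with $0<\abs{\rho}<r^{\ast}$, the elementary identity
\[
\lim_{\abs{\rho}\to 0}\frac{\sup_{z\in u-E}\phi(z,\rho)}{\phi(u,\rho)}-\lim_{\abs{\rho}\to 0}\frac{\phi(u-v^{\ast},\rho)}{\phi(u,\rho)}=\lim_{\abs{\rho}\to 0}\frac{\sup_{z\in u-E}\phi(z,\rho)-\phi(u-v^{\ast},\rho)}{\phi(u,\rho)}=0
\]
shows that the two limits agree, so that the pairwise $E$-weak condition $\lim_{\abs{\rho}\to 0}\phi(u-v,\rho)/\phi(u,\rho)\leq 1$ (for all $v\in E$) upgrades to the $E$-strong condition \eqref{eq:supE_weak_mode_generalisation}. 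Chaining these two equivalences, exactly as Corollary~\ref{cor:main_result} chains Theorems~\ref{thm:equivalence_Estrong_strong} and~\ref{thm:equivalence_Eweak_Estrong}, and noting that the chain of implications \eqref{eq:strong_implies_supEweak_implies_Eweak} persists in this setting, completes the proof.

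I do not expect a genuine obstacle: the whole content is that admitting complex, negative, or merely nonzero scalings costs nothing because $\rho\mapsto\rho K$ produces only further bounded open neighbourhoods of the origin. The one point that wants care is that, when $\mathbb{K}=\mathbb{C}$, the limit $\abs{\rho}\to 0$ runs over a punctured two-dimensional neighbourhood of $0$ rather than along a single ray; this is harmless because each auxiliary statement above --- lower semicontinuity of $\phi(\quark,\rho)$, the fibrewise equality of suprema, and the vanishing of the difference quotient --- is established for each fixed $\rho$ and only afterwards passed to the limit, so no uniformity in the argument of $\rho$ is ever required. It would also be prudent to record explicitly (as is implicit in Assumption~\ref{asmp:context} together with \eqref{eq:evaluation_map_f_arbitrary_nonzero_scaling}) that the translations and dilations involved are homeomorphisms of $X$, since this is precisely what licenses copying the topological steps of Sections~\ref{sec:E_strong_equiv_strong} and~\ref{sec:E_weak_equiv_E_strong} unchanged.
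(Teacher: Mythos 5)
Your proposal is correct and follows essentially the same route as the paper's own proof: the paper likewise observes that Proposition~\ref{prop:E_topologically_dense_implies_suprema_of_f_on_r_fibres_coincide} applies for each fixed scaling $\rho$ (giving the $E$-strong/strong equivalence after replacing $f$, $r$, and small-$r$ limits by $\phi$, $\rho$, and small-$\abs{\rho}$ limits), and then reuses the difference-of-limits identity from Lemma~\ref{lem:uniform_plus_Eweak_implies_supEweak} under the $\mathbb{K}_{\times}$-uniformity condition before chaining via \eqref{eq:strong_implies_supEweak_implies_Eweak}. Your extra remarks --- that lower semicontinuity must be re-established fibrewise and that no uniformity in the argument of $\rho$ is needed --- are sound and simply make explicit what the paper leaves implicit.
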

The second hypothesis is an extension of the uniformity condition \eqref{eq:uniformity_condition} to $\mathbb{K}_{\times}$.
\begin{proof}
The proof of Proposition~\ref{prop:E_topologically_dense_implies_suprema_of_f_on_r_fibres_coincide} applies for a fixed scaling.
Therefore, after modifying these proofs by replacing $f$, $r$, and small-$r$ limits with $\phi$, $\rho$, and small $\abs{\rho}$-limits respectively, we obtain that $E$-strong mode of $\mu$ in the sense of \eqref{eq:supE_weak_mode_generalisation} also is a strong mode in the sense of \eqref{eq:mode_generalisation}.

To prove the remaining equivalence, suppose that $u$ is an $E$-weak mode, and let $v^{\ast}\in E$ be as in the $\mathbb{K}_{\times}$-uniformity condition. Observe as in the proof of Lemma~\ref{lem:uniform_plus_Eweak_implies_supEweak} that
\begin{equation*}
 \lim_{\abs{\rho}\downarrow 0}\frac{\sup_{z\in u-E}\phi(z,\rho)}{\phi(u,\rho)}-\lim_{\abs{\rho}\downarrow 0}\frac{\phi(u-v^\ast,\rho)}{\phi(u,\rho)}=\lim_{\abs{\rho}\downarrow 0}\frac{\sup_{z\in u-E}\phi(z,\rho)-\phi(u-v^\ast,\rho)}{\phi(u,\rho)},
\end{equation*}
and that the right-hand side is zero by the $\mathbb{K}_{\times}$-uniformity condition. Thus if $u$ is an $E$-weak mode, it is also an $E$-strong mode. Using \eqref{eq:strong_implies_supEweak_implies_Eweak} completes the proof.
\end{proof}

\subsection{Local modes}
\label{ssec:local_modes}

In \cite[Definition 2.5]{agapiou_et_al_sparsity}, one defines a point $u$ in a Banach space $X$ to be a local mode and local $E$-weak mode by allowing translations to points in a norm ball centred at $u$ and the intersection of this ball with $E$.

\begin{definition}
	\label{def:local_weak_strong_modes}
	Let $V\in\mathcal{N}(0)$ be bounded.
	A point $u \in X$ is a \emph{local strong mode} of $\mu$ with respect to $V$ if 
	\begin{equation}
		\label{eq:local_mode}
		\lim_{\abs{\rho} \downarrow 0} \frac{\sup_{z \in u-V}\phi(z, \rho)}{\phi(u, \rho)}=1.
	\end{equation} 
	A \emph{local $E$-weak mode} of $\mu$ with respect to $V$ is a point $u \in\text{supp}(\mu)$ such that 
	\begin{equation}
		\label{eq:local_Eweak_mode}
		\lim_{\abs{\rho} \downarrow 0} \frac{\sup_{z \in u-E\cap V}\phi(z, \rho)}{\phi(u, \rho)} \leq 1.
	\end{equation}
\end{definition}
For brevity, we omit defining the a local $E$-strong mode; note that we have the analogous chain of implications as in \eqref{eq:strong_implies_supEweak_implies_Eweak}. The global nature of a point $u$ that satisfies \eqref{eq:mode_generalisation} and \eqref{eq:Eweak_mode_generalisation} is evident by writing the latter in a similar form to \eqref{eq:local_mode} and \eqref{eq:local_Eweak_mode}, in which case $V=X$.

In \cite[Theorem~2.10]{agapiou_et_al_sparsity}, it was shown that for any log-concave probability measure $\mu$ on separable Banach space $X$, every local strong mode is a global strong mode, and every local $E$-weak mode is also a global $E$-weak mode, for the case when $X$ is a Banach space and $V$ is the unit norm ball.
We now consider the question of when a local $E$-weak mode is a local strong mode.

\begin{theorem}
	Suppose that Assumption~\ref{asmp:context} holds, and let $V\in\mathcal{N}(0)$ be bounded.
	If $E$ is topologically dense in $V$, and if 
	\begin{align*}
		\exists (v^{\ast},r^{\ast})\in E\cap V\times (0,1) &\text{ such that, for all $\rho \in \left\{z\in\mathbb{C}~ \middle\vert~ 0<\abs{z}<r^{\ast}\right\}$,} \\		\notag
		\phi(u-v^\ast,\rho) &=\sup_{z\in u-E\cap V}\phi(z,\rho),
	\end{align*}
	then $u$ is a local $E$-weak mode if and only if $u$ is a local strong mode \eqref{eq:local_mode}.
\end{theorem}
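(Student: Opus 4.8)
The plan is to reproduce the two-step reduction behind Corollary~\ref{cor:main_result} and Theorem~\ref{thm:main_result_general_multiplicative_field_K}, passing through the intermediate notion of a \emph{local $E\cap V$-strong mode}, i.e.\ a point $u\in\supp(\mu)$ with $\lim_{\abs{\rho}\downarrow 0}\sup_{z\in u-(E\cap V)}\phi(z,\rho)/\phi(u,\rho)\le 1$. Throughout I would use that $u-V$ is an \emph{open} subset of $X$ in which $u-(E\cap V)$ is dense. First I would pass from a local $E$-weak mode to a local $E\cap V$-strong mode via the $\mathbb{K}_{\times}$-uniformity hypothesis; then from a local $E\cap V$-strong mode to a local strong mode via lower semicontinuity of $\phi(\quark,\rho)$ and the density of $E$ in $V$; the reverse implication ``local strong $\Rightarrow$ local $E$-weak'' is immediate from $u-(E\cap V)\subseteq u-V$.

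For the first step I would argue exactly as in Lemma~\ref{lem:uniform_plus_Eweak_implies_supEweak} and the proof of Theorem~\ref{thm:main_result_general_multiplicative_field_K}: since $v^{\ast}\in E\cap V$ and $\phi(u-v^{\ast},\rho)=\sup_{z\in u-(E\cap V)}\phi(z,\rho)$ whenever $0<\abs{\rho}<r^{\ast}$, the quotient
\[
	\frac{\sup_{z\in u-(E\cap V)}\phi(z,\rho)-\phi(u-v^{\ast},\rho)}{\phi(u,\rho)}
\]
vanishes for all such $\rho$, so $\lim_{\abs{\rho}\downarrow 0}\sup_{z\in u-(E\cap V)}\phi(z,\rho)/\phi(u,\rho)=\lim_{\abs{\rho}\downarrow 0}\phi(u-v^{\ast},\rho)/\phi(u,\rho)$, and the defining inequality of a local $E$-weak mode applied to $v=v^{\ast}$ bounds the right-hand limit by $1$. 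For the second step I would first note that, for each fixed $\rho\in\mathbb{K}_{\times}$, the map $\phi(\quark,\rho)=\mu(\quark+\rho K)$ is lower semicontinuous on $X$ --- this is Lemma~\ref{lem:lower_semicontinuity_of_f} with the bounded open neighbourhood $\rho K$ of the origin in place of $rK$, its proof transcribing verbatim. Since $V\in\mathcal{N}(0)$ is open and $E$ is dense in $V$, the set $E\cap V$ is dense in $V$, hence $u-(E\cap V)$ is dense in the open set $u-V$; repeating the argument of Proposition~\ref{prop:E_topologically_dense_implies_suprema_of_f_on_r_fibres_coincide} with $X$ replaced by $u-V$ --- treating the cases $\sup_{z\in u-V}\phi(z,\rho)=+\infty$ and $\sup_{z\in u-V}\phi(z,\rho)<+\infty$ separately, and using that superlevel sets of the lower semicontinuous function $\phi(\quark,\rho)$ are open and that every nonempty open subset of $u-V$ meets the dense set $u-(E\cap V)$ --- yields $\sup_{z\in u-(E\cap V)}\phi(z,\rho)=\sup_{z\in u-V}\phi(z,\rho)$ for every $\rho\in\mathbb{K}_{\times}$. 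Dividing by $\phi(u,\rho)$ and letting $\abs{\rho}\downarrow 0$, the local $E\cap V$-strong condition becomes $\lim_{\abs{\rho}\downarrow 0}\sup_{z\in u-V}\phi(z,\rho)/\phi(u,\rho)\le 1$; since $0\in V$ forces $u\in u-V$, this ratio is also $\ge 1$, so the limit equals $1$ and $u$ is a local strong mode in the sense of \eqref{eq:local_mode}. Chaining the three implications closes the equivalence.

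The only genuinely new ingredient relative to the results already proved is the ``localisation'' in the second step: one must check that the supremum-over-a-dense-subset argument of Proposition~\ref{prop:E_topologically_dense_implies_suprema_of_f_on_r_fibres_coincide} survives when $X$ is replaced by the merely open (not closed) set $u-V$, and that $E\cap V$ inherits density in $V$ from the density of $E$. I do not expect either to be a serious obstacle, since both rest only on $u-V$ being open; the remainder is a transcription of the earlier arguments with $f$, $r$ and small-$r$ limits replaced by $\phi$, $\rho$ and small-$\abs{\rho}$ limits.
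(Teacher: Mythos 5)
Your proposal is correct and follows essentially the same route as the paper's (sketched) proof: an intermediate local $E$-strong notion, with the uniformity hypothesis handling the weak-to-strong step as in Lemma~\ref{lem:uniform_plus_Eweak_implies_supEweak}, and lower semicontinuity of $\phi(\quark,\rho)$ plus density of $E\cap V$ in the open set $V$ handling the passage to the supremum over $u-V$ as in Proposition~\ref{prop:E_topologically_dense_implies_suprema_of_f_on_r_fibres_coincide}. Your explicit check that the dense-subset supremum argument survives localisation to the open set $u-V$ is the one point the paper leaves implicit, and you resolve it correctly.
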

We sketch the proof below.
\begin{proof}
If, in the proof of Proposition~\ref{prop:E_topologically_dense_implies_suprema_of_f_on_r_fibres_coincide}, we replace $X$ with $x-V$, $x - E$ with $x - E\cap V$, and $f(\quark,r)$ with $\phi(\quark,\rho)$, then the density of $E$ in $V$ implies that 
\begin{equation*}
 \sup_{z\in x-V}\phi(z,\rho)=\sup_{z\in x-E\cap V}\phi(z,\rho),
\end{equation*}
for all $(x, \rho) \in X \times \mathbb{K}_{\times}$.
Thus, the local coincident limiting ratios condition with respect to $V$ holds, i.e.
\begin{equation*}
	%\tag{LCLR}
	%\label{eq:coincident_limiting_ratios_condition_local} 
	\lim_{\abs{\rho} \downarrow 0} \frac{\sup_{z \in x-E\cap V}\phi(z, \rho)}{\phi(x, \rho)}=\lim_{\abs{\rho} \downarrow 0} \frac{\sup_{z \in x-V}\phi(z, \rho)}{\phi(x, \rho)},
\end{equation*}
so local $E$-strong modes and local strong modes are equivalent. Using the local uniformity condition hypothesis and elementary properties of limits, we have
\begin{equation*}
	\lim_{\abs{\rho} \downarrow 0} \frac{\sup_{z \in x-V}\phi(z, \rho)}{\phi(x, \rho)}-\lim_{\abs{\rho}\downarrow 0}\frac{\phi(u-v^{\ast},\rho)}{\phi(u,\rho)}=0,
\end{equation*}
which completes the proof that every local $E$-weak mode is a local strong mode. Since the analogue of \eqref{eq:strong_implies_supEweak_implies_Eweak} holds for local modes, the proof is complete.
\end{proof}

\subsection{Examples concerning the existence and $K$-dependence of modes}
\label{ssec:examples_concerning_existence_K-dependence_modes}

We conclude with some further discussion of the existence and $K$-dependency of modes.
Recall that Example~\ref{example:weak_does_not_imply_weak_uniform} gave an example of a measure on $\mathbb{R}^{2}$ with weak modes but no strong modes;
our next example is of a measure on $\mathbb{R}$ with no modes at all:

\begin{figure}[t]
	\begin{center}
		\includegraphics[width=6.0cm]{./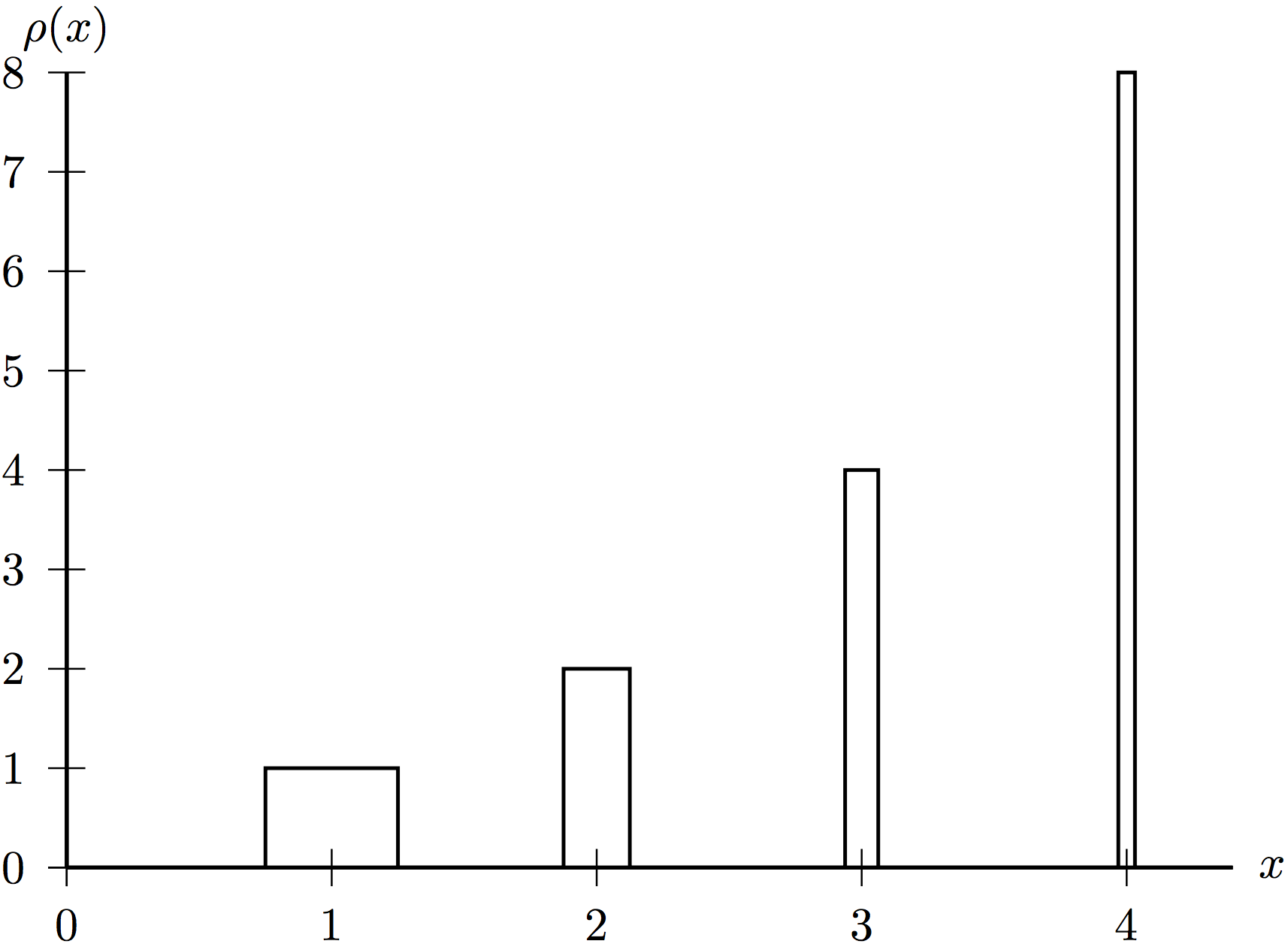}
	\end{center}
	\caption{The probability density function $\rho$ of a probability measure on the real line with no modes in the na{\"\i}ve, strong, or weak senses, given by \eqref{eq:no_mode} with parameters $a = 2$, $b = 4$.
	The local uniform distribution centred on each $n \in \mathbb{N}$ has mass $2^{-n}$, with $\rho$ having magnitude $2^{n - 1}$ on an interval of length $2^{1 - 2 n}$.}
	\label{fig:no_mode}
\end{figure}

\begin{example}[Probability measure without a mode]
	\label{eg:no_mode}
	Fix parameters $1 < a < b < \infty$ with $b > 2$;
	write $A \defeq \sum_{n \in \mathbb{N}} a^{-n} < \infty$.
	A good choice, which helps with the intuition, is to take $a = 2$ and $b = 3$ or $4$.
	Define $\mu$ to be the probability measure with Lebesgue density (probability density function) $\rho \colon \mathbb{R} \to \mathbb{R}_{\geq 0}$,
	\begin{equation}
		\label{eq:no_mode}
		\rho(x) \equiv \frac{\rd \mu}{\rd x} (x) \defeq \sum_{n \in \mathbb{N}} \frac{1}{2 A} (b / a)^{n} \mathbb{I}\bigl[ | x - n | < b^{-n} \bigr] .
	\end{equation}
	As illustrated in Figure~\ref{fig:no_mode}, $\rho$ is a step function with height $\frac{1}{2 A} (b / a)^{n}$ on each interval of length $2 b^{-n}$ centred on each natural number $n$, so that this interval has mass $a^{-n} / A$;
	the density function $\rho$ takes the value $0$ outside these intervals.
	(The restriction that $b > 2$ ensures that the supports of the indicator functions $\mathbb{I}\bigl[ | x - n | < b^{-n} \bigr]$ in \eqref{eq:no_mode} are pairwise disjoint.)

	As can be seen from inspection, $\rho$ is unbounded above and has no global maximum.
	Thus, $\mu$ has no mode in the na{\"\i}ve sense of a maximiser of its density function.
	Furthermore, the small-balls approach does not rectify the situation.
	For the ball radius $r_{n} = b^{-n} > 0$, the unique maximiser of $x \mapsto \mu((x - r_{n}, x + r_{n}))$ is $x_{n}^{\ast} = n$ (with maximum value $a^{-n} / A$).
	Clearly, the sequence $(x_{n}^{\ast})_{n \in \mathbb{N}}$ does not converge in $\mathbb{R}$, and indeed has no convergent subsequence, so we cannot hope that a mode will arise as an accumulation point of the sequence $(x_{n}^{\ast})_{n \in \mathbb{N}}$ of approximate modes, in the style of \cite{dashti_law_stuart_voss_map_estimators_2013_published}, since this sequence has no such accumulation points.
	
	Indeed, it is possible to directly verify that $\mu$ has no strong modes in the sense of \eqref{eq:mode}.
	For any proposed strong mode $u \in \mathbb{R}$, let $N \defeq \lceil u \rceil + 1$.
	Then, for $r \defeq r_{N} = b^{-N}$ and $x_{N}^{\ast} = N$,
	\[
		\frac{\sup_{z \in \mathbb{R}} f(z, r)}{f(u, r)} \geq \frac{f(x_{N}^{\ast}, r)}{f(u, r)} \geq \frac{A^{-1} a^{-N}}{2 b^{- N} \cdot (2 A)^{-1} (b / a)^{N - 1}} = \frac{a^{-N}}{b^{-1} a^{1 - N}} = \frac{b}{a} > 1 .
	\]
	Hence, $u$ cannot be a strong mode for $\mu$.
	Similarly, $u$ is not an $E$-weak mode for any dense $E \subseteq \mathbb{R}$.
	
	This example can be modified to produce a smooth density $\rho$ with the same pathology.
	Also, under the map $x \mapsto \pi^{-1} \arctan (x)$, this pathological measure is pushed forward to one with compact support in $[0, 1]$;
	in this case, the sequence $(x_{n}^{\ast})_{n \in \mathbb{N}}$ of approximate modes would converge to $1$, which is again not a mode.
\end{example}

We note that Example~\ref{eg:no_mode} relies essentially on the \emph{unboundedness} of the density;
in \cite[Example~2.2]{clason_generalized_modes}, another example is given of a probability measure on $\mathbb{R}$ with no strong mode, which instead relies on \emph{discontinuity} of the density.

In light of Example~\ref{eg:no_mode}, it would be interesting to determine conditions on $\mu$ that guarantee the existence of a mode, or conditions on $\mu$ and $E$ that guarantee the existence of an $E$-weak mode.

The definitions of a strong mode in \eqref{eq:mode} and an $E$-weak mode in \eqref{eq:supE_weak_mode} require a choice of the reference neighbourhood of the origin $K$.
The question then arises as to whether, for distinct choices of $K,K'\in\mathcal{N}(0)$, a mode defined in terms of $K$ is also a mode defined in terms of $K'$.
The following two-dimensional example indicates that this is not always the case:

\begin{figure}
	\centering
	\includegraphics[width=6.0cm]{./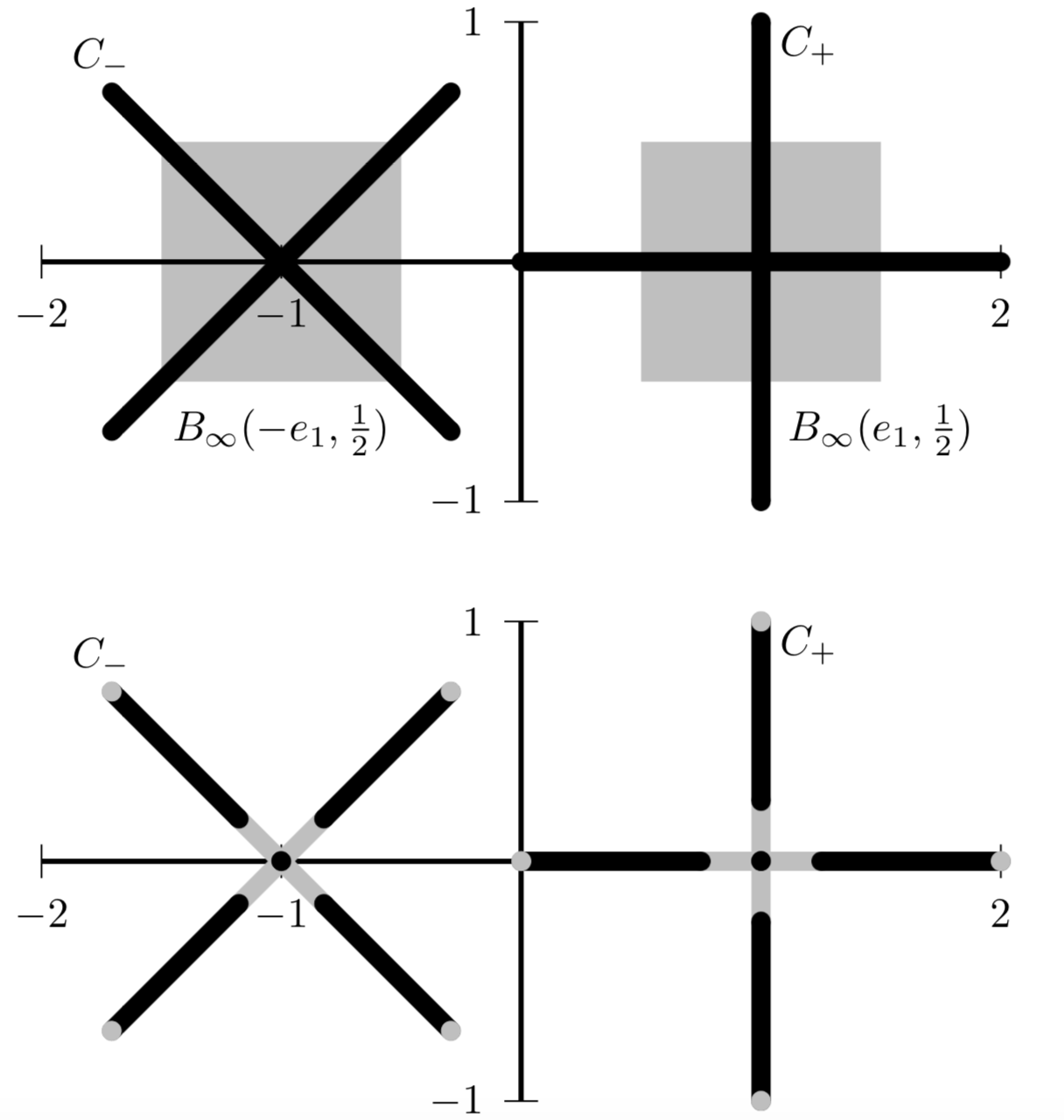}
	\caption{\textsc{Above.}
	Uniform measure $\mu$ on the $1$-dimensional set shown in black has $-e_{1}$ (resp.\ $e_{1}$) as its unique strong mode with respect to $K = B_{\infty}$ (resp.\ $K = B_{1}$), as can be seen by comparing the length of $C_{-} \cap K(-e_{1}, \tfrac{1}{2})$ to that of $C_{+} \cap K(e_{1}, \tfrac{1}{2})$.\newline
	\textsc{Below.}
	The local modes of $\mu$, localised using $V = B_{2}(0, \tfrac{1}{4})$, are the same for both choices of $K$, and are highlighted in black, whereas the remainder of $\supp(\mu)$ is shown in grey.}
	\label{fig:mode_depends_on_K}
\end{figure}

\begin{example}[Modes depend upon $K$]
	\label{eg:mode_depends_on_K}
	Let $K = B_{p}$ be the $\ell_{p}$-norm open unit ball in $X = \mathbb{R}^{2}$;
	we will pay particular attention\footnote{This is a choice of convenience.
	In fact, this example exhibits the same behaviour for any two choices of $0 < p \leq \infty$ on opposite sides of $p = 2$.} to the cases $p = 1$ and $\infty$.
	Let $e_1 \defeq (1,0) \in \mathbb{R}^{2}$,
	\begin{align*}
		C_{-} & \defeq -e_{1} + \{ (x, y) \in \mathbb{R}^{2} \mid x^{2} = y^{2} \leq 1 \} \\
		C_{+} & \defeq e_{1} + \left\{ (x, y) \in \mathbb{R}^{2} \,\middle|\, \begin{array}{c} \text{either } x = 0 \text{ and } | y | \leq 1 \text{,} \\ \text{or } | x | \leq 1 \text{ and } y = 0 \end{array} \right\}, 
	\end{align*}
	and let $\mu(E) \defeq \mathcal{H}^{1}(E \cap (C_{-} \uplus C_{+}))$, where $\mathcal{H}^{1}$ again denotes 1-dimensional Haus\-dorff measure, normalised to give unit measure to any line segment of unit length.
	Thus, $\mu$ is uniform measure on the disjoint union $\supp(\mu) = C_{-} \uplus C_{+}$ of two right-angled crosses, namely $C_{+}$ centred at $e_{1}$ and aligned with the axes and $C_{-}$ centred at $-e_{1}$ and aligned at $\pi / 4$ to the axes;
	$\mu$ has finite total mass $4$.
	
	There are two intuitively plausible candidates for modes of this measure $\mu$, namely $u = \pm e_{1}$.
	Exactly \emph{which} is the strong mode depends whether one chooses $K = B_{1}$ or $K = B_{\infty}$, as illustrated in Figure~\ref{fig:mode_depends_on_K}.
	To be more precise, for $0 < r < \tfrac{1}{2}$,
	\begin{align*}
		\mu(B_{1}(-e_{1}, r)) & = 2 \sqrt{2} r , &
		\mu(B_{\infty}(-e_{1}, r)) & = 4 \sqrt{2} r , \\
		\mu(B_{1}(e_{1}, r)) & = 4 r , &
		\mu(B_{\infty}(e_{1}, r)) & = 4 r ,
		\intertext{and, for any $\tilde{u} \in X \setminus \{ \pm e_{1} \}$ and sufficiently small $r > 0$,}
		\mu(B_{1}(\tilde{u}, r)) & \leq 2 r , &
		\mu(B_{\infty}(\tilde{u}, r)) & \leq 2 \sqrt{2} r .
	\end{align*}
	Thus, $u \mapsto \mu(B_{1}(u, r))$ is globally maximised by taking $u = e_{1}$, whereas $u \mapsto \mu(B_{\infty}(u, r))$ is globally maximised by taking $u = -e_{1}$.
	It then follows that $u = e_{1}$ is a strong mode of $\mu$ with respect to $K = B_{1}$, while $u = -e_{1}$ is a strong mode of $\mu$ with respect to $K = B_{\infty}$.
	These modes are unique since, because $2 \sqrt{2} < 4 < 4 \sqrt{2}$, $u = e_{1}$ is \emph{not} a strong mode of $\mu$ with respect to $B_{\infty}$, and $u = -e_{1}$ is \emph{not} a strong mode of $\mu$ with respect to $B_{1}$.

	The mechanisms underlying this example are twofold:
	first, $\mu$ is not absolutely continuous with respect to $2$-dimensional Lebesgue measure on $X$, and so the Lebesgue differentiation theorem does not apply here;
	secondly, the anisotropy of the $1$-norm and $\infty$-norm matches the anisotropy of the $C_{-}$ and $C_{+}$ components of $\supp(\mu)$.
	This anisotropy is particularly important:
	both choices of $u = \pm e_{1}$ are strong modes of $\mu$ with respect to $K = B_{2}$, the Euclidean ball.
	
	Note also that, for sufficiently small $V$, e.g.\ $V = B_{2}(0, \tfrac{1}{4})$, $u = e_{1}$ becomes a \emph{local} strong mode of $\mu$ with respect to $B_{\infty}$, since this choice of $V$ means that $\mu(B_{\infty}(e_{1}, r))$ is never compared in \eqref{eq:local_mode} against any $\mu(B_{\infty}(z, r))$ with $z \in C_{-}$.
	Similarly, $u = -e_{1}$ becomes a local strong mode of $\mu$ with respect to $B_{1}$.
	Note, however, that the full set of local modes is perhaps counterintuitive:
	it is, as illustrated in Figure~\ref{fig:mode_depends_on_K},
	\[
		\bigl( C_{-} \cap B_{2}(-e_{1}, 1) \cap B_{2}(-e_{1}, \tfrac{1}{4})^{\complement} \bigr) \uplus \bigl( C_{+} \cap B_{2}(e_{1}, 1) \cap B_{2}(e_{1}, \tfrac{1}{4})^{\complement} \bigr) \uplus \{ \pm e_{1} \} ,
	\]
	namely those points of $\supp(\mu)$ that are not at the extremes of the ``arms'' of the crosses, nor too close to the isolated local modes at $\pm e_{1}$.
\end{example}

Example~\ref{eg:mode_depends_on_K} indicates the importance of the choice of the sets $K$ and $J$ in the definitions \eqref{eq:evaluation_map_f} and \eqref{eq:evaluation_map_f_arbitrary_nonzero_scaling} of $f$ and $\phi$ respectively.
In the case when $(X, \Vrt{\quark})$ is a normed vector space, or when $(X, d)$ is a metric vector space, then it may be natural to choose $K$ to be the the unit open $\Vrt{\quark}$-norm or $d$-metric ball, since these sets are native to the structure of the vector space.
If there is no unambiguous choice, then it may be useful to determine whether there exist conditions for which a mode defined in terms of $K$ is also a mode in terms of $K'$.

\section*{Acknowledgements}

We thank Gerd Wachsmuth, in particular for suggesting a simpler proof of a previous version of Theorem~\ref{thm:equivalence_Estrong_strong} and for pointing out an error in a result concerning weak and $E$-strong modes in a previous version of this manuscript.
We also thank Sergios Agapiou and Tapio Helin for their feedback on an early version of this manuscript, and the two anonymous peer reviewers for their helpful comments.

The authors acknowledge support provided by the Freie Universit\"{a}t Berlin within the Excellence Initiative of the German Research Foundation (DFG);
by DFG grant CRC 1114 ``Scaling Cascades in Complex Systems'';
and by the National Science Foundation (NSF) under grant DMS-1127914 to the Statistical and Applied Mathematical Sciences Institute (SAMSI) and SAMSI's QMC Working Group II ``Probabilistic Numerics''.
Any opinions, findings, and conclusions or recommendations expressed in this article are those of the authors and do not necessarily reflect the views of the above-named funding agencies and institutions.

\bibliographystyle{amsplain}
\bibliography{modes_new}

\end{document}